\newtheorem{lemma}{Lemma}[section]
\newtheorem{theorem}[lemma]{Theorem}
\newtheorem{remark}[lemma]{Remark}
\newtheorem{prop}[lemma]{Proposition}
\newtheorem{coro}[lemma]{Corollary}
\newtheorem{example}[lemma]{Example}
\numberwithin{equation}{section}
\begin{document}

\title[On the Averaging Theorems for Stochastic Perturbation]
{On the Averaging Theorems for Stochastic Perturbation of Conservative Linear Systems}

\author{Jing Guo}
\address{J. Guo: School of Mathematics Sciences,
Dalian University of Technology, 116024 Dalian, P. R. China; Universit\'{e} Paris Cit\'{e} and Sorbonne Universit\'{e}, CNRS, IMJ-PRJ, F-75013 Paris, France}
\email{jingguo062@hotmail.com}

\author{Sergei Kuksin}
\address{S. Kuksin: Universit\'{e} Paris Cit\'{e} and Sorbonne Universit\'{e}, CNRS, IMJ-PRG, F-75013 Paris, France; Steklov Mathematical Institute of RAS, 119991 Moscow, Russia; National Research University Higher School of Economics, 119048 Moscow, Russia}
\email{sergei.kuksin@imj-prg.fr}

\author{Zhenxin Liu}
\address{Z. Liu: School of Mathematical Sciences,
Dalian University of Technology, 116024 Dalian, P. R. China}
\email{zxliu@dlut.edu.cn}

\date{}
\subjclass[2020]{60H10, 37J40, 34D10}

\keywords{Stochastic averaging, Krylov-Bogoliubov averaging, Uniform in time, Mixing, Non-resonant frequencies, Effective equation}

\begin{abstract}
For stochastic perturbations of linear systems with non-zero pure imaginary spectrum we discuss the averaging theorems in terms of the slow-fast action-angle variables and in the sense of Krylov-Bogoliubov. Then we show that if the diffusion matrix of the perturbation is uniformly elliptic, then in all cases the averaged dynamics does not depend on a hamiltonian part of the perturbation.
\end{abstract}

\maketitle

\section{Introduction}
\setcounter{equation}{0}

We consider a linear system
\begin{equation*}
{\rm{d}} v(t)+Av(t){\rm{d}}t=0, \ v(t)\in\mathbb{R}^{2n},
\end{equation*}
where the operator $A$ does not have Jordan cells and has non-zero pure imaginary eigenvalues. Introducing suitable complex coordinates we write $\mathbb{R}^{2n}$ as a complex space $\mathbb{C}^{n}$, where the operator $A$ takes the diagonal form ${\rm{diag}}\{i\lambda_{j}\}$, and accordingly the system reads
\begin{equation}\label{x6}
{\rm{d}} v_{k}+i\lambda_{k}v_{k} {\rm{d}} t=0, \quad k=1,2,\ldots,n, \quad v_{k}\in \mathbb{C}.
\end{equation}
Below we always use the complex coordinates as in (\ref{x6}), and denote by $\Lambda$ the frequency vector of the system,
\begin{equation*}
\Lambda=(\lambda_{1},\ldots,\lambda_{n})\in(\mathbb{R}\backslash \{0\})^{n}.
\end{equation*}
In this work we analyse the stochastic perturbations of system (\ref{x6})
\begin{equation}\label{z1}
{\rm{d}} v(t)+Av(t){\rm{d}}t=\varepsilon P(v(t)){\rm{d}}t+\sqrt{\varepsilon}\Psi(v(t)){\rm{d}}\beta(t)+\sqrt{\varepsilon}\Theta(v(t)){\rm{d}}\overline{\beta}(t), \ v(0)=v_{0}\in\mathbb{C}^{n}.
\end{equation}
Here $\varepsilon\in(0,1]$, $P(v)$ is a vector field in $\mathbb{C}^{n}$, $\Psi(v)$ and $\Theta(v)$ are complex $n\times n_{1}$-matrices, $\beta(t)$ is the standard complex Wiener process in $\mathbb{C}^{n_{1}}$ and $\overline{\beta}(t)$ is the conjugated process. That is,
\begin{align*}
\beta(t)=(\beta_{1}(t),\ldots,\beta_{n_{1}}(t)),\quad \overline{\beta}(t)=(\overline{\beta_{1}}(t),\ldots,\overline{\beta_{n_{1}}}(t)),
\end{align*}
where $\beta_{j}(t)=\beta_{j}^{R}(t)+i\beta_{j}^{I}(t)$, $\overline{\beta_{j}}(t)=\beta_{j}^{R}(t)-i\beta_{j}^{I}(t)$ and $\{\beta_{j}^{R}(t), \beta_{j}^{I}(t), 1\leq j\leq n_{1}\}$ are standard independent real Wiener processes.
To simplify presentation we restrict ourselves to equations with $\Theta=0$, but suitable versions of all results below hold for equations (\ref{z1}) with non-zero matrices $\Theta(v)$. Passing to the slow time $\tau=\varepsilon t$ we rewrite equation (\ref{z1}) with $\Theta=0$ as
\begin{equation}\label{w1}
{\rm{d}} v_{k}(\tau)+i\varepsilon^{-1}\lambda_{k}v_{k}(\tau){\rm{d}}\tau=P_{k}(v(\tau)){\rm{d}}\tau+\sum\limits_{l=1}^{n_{1}}\Psi_{kl}(v(\tau)){\rm{d}}\beta_{l}(\tau), \ k=1,2,\ldots,n.
\end{equation}
Finally, we pass to the interaction representation
\begin{equation*}
a_{k}(\tau)={\rm{e}}^{i\tau\varepsilon^{-1}\lambda_{k}}v_{k}(\tau),\quad k=1,\ldots,n,
\end{equation*}
so $|a_{k}(\tau)|\equiv|v_{k}(\tau)|$ for all $k$, and write equation (\ref{w1}) in the $a_{k}$\text{-}variables as
\begin{equation}\label{z3}
{\rm{d}} a_{k}(\tau)={\rm{e}}^{i\tau\varepsilon^{-1}\lambda_{k}}P_{k}(v(\tau)){\rm{d}}\tau+{\rm{e}}^{i\tau\varepsilon^{-1}\lambda_{k}}\sum\limits_{l=1}^{n_{1}}\Psi_{kl}(v(\tau)){\rm{d}}\beta_{l}(\tau), \ k=1,2,\ldots,n.
\end{equation}

This paper develops the work \cite{HK}, and is mainly focused on the non-resonant case\footnote{In difference with \cite{HK}, where systems (\ref{w1}) with general frequency vectors $\Lambda$ are examined.}, when for any nonzero integer vector $m=(m_{1},\ldots,m_{n})\in \mathbb{Z}^{n}$ we have
\begin{equation}\label{w2}
\Sigma m_{j}\lambda_{j}\neq0.
\end{equation}
We are concerned with the limiting behavior of actions $I_{k}(\tau)=\frac{1}{2}|v_{k}(\tau)|^{2}=\frac{1}{2}|a_{k}(\tau)|^{2}$ as $\varepsilon\rightarrow0$. In Section 2.1 we discuss it in terms of solutions for equations (\ref{z3}), and then in Section 2.2 in terms of the action-angles coordinates $(I_{k},\varphi_{k}={\rm{Arg}}v_{k}$) for equations (\ref{x6}). Next in Section 3 we prove that in all cases the limiting behavior is independent of a hamiltonian part of the drift $P(v)$. As we show in our second paper \cite{JKL2} this result is instrumental to study averaging in stochastic PDEs.

\subsection*{Notation.}
If $m\geq0$, $E$ is a Banach space and $L$ is $\mathbb{R}^{n}$ or $\mathbb{C}^{n}$, we denote by ${\rm{Lip}}_{m}(L,E)$ the set of locally Lipschitz maps $F:L\rightarrow E$ such that for any $R\geq1$,
\[
(1+|R|)^{-m}\left({\rm{Lip}}(F|_{\overline{B}_{R}(L)})+\sup\limits_{v\in\overline{B}_{R}(L)}|F(v)|_{E}\right)=:\mathcal{C}^{m}(F)<\infty,
\]
where ${\rm{Lip}}(f)$ is the Lipschitz constant of a map $f$ and $\overline{B}_{R}(L)$ is the closed $R$\text{-}ball $\left\{v\big| |v|_{L}\leq R\right\}$. We denote $\mathbb{R}^{n}_{+}=\{x=(x_{1},\ldots,x_{n})\in\mathbb{R}^{n}: x_{j}\geq0 \ \forall j\}$. For a complex matrix $A=(A_{ij})$, $A^{*}$ stands for its Hermitian conjugated matrix: $A^{*}_{ij}=\overline{A_{ji}}$. We denote by $\mathcal{D}(\xi)$ the law of a random variable $\xi$, by $\rightharpoonup$ the weak convergence of measures and by $\mathcal{P}(M)$ the set of Borel measures on the metric space $M$. For complex numbers $z_{1}$, $z_{2}$, we denote their real scalar product by $z_{1}\cdot z_{2}=\mathfrak{R}z_{1}\overline{z_{2}}$. For real numbers $a$ and $b$, $a\vee b$ and $a\wedge b$ indicate their maximum and minimum. For a set $Q$, $1_{Q}$ is its indicator function.

\section{Averaging and effective equation}

We recall (\ref{w2}) and assume that

\textbf{Assumptions }
(A1): The drift $P(v)=(P_{1}(v),\ldots,P_{n}(v))$ belongs to ${\rm{Lip}}_{m_{0}}(\mathbb{C}^{n},\mathbb{C}^{n})$ for some $m_{0}\geq0$. The matrix function $\Psi(v)=(\Psi_{kl}(v))$ belongs to ${\rm{Lip}}_{m_{0}}(\mathbb{C}^{n},{\rm {Mat}}(n\times n_{1}))$.

(A2): The matric function $\Psi(v)$ satisfies one of the following three conditions:

\quad (i) it is $v$-independent;

 or

\quad (ii) it satisfies the non-degeneracy condition: $\Psi(v)\Psi^{*}(v)\geq \alpha E$ \ $\forall v$, for some $\alpha>0$;

or

\quad (iii) it is a $C^{2}$-smooth matrix-function of $v$.

(A3): For any $v_{0}\in\mathbb{C}^{n}$ equation (\ref{w1}) has a unique strong solution $v^{\varepsilon}(\tau;v_{0})$, $\tau\in[0,T]$, which is equal to $v_{0}$ at $\tau=0$. Moreover, there exists $m_{0}'>(m_{0}\vee4)$ such that
\begin{equation}\label{w8}
\mathbf{E}\sup\limits_{0\leq\tau\leq T}|v^{\varepsilon}(\tau;v_{0})|^{2m_{0}'}\leq C_{m_{0}'}(|v_{0}|,T)<\infty.
\end{equation}

Instead of (A3) we may assume a stronger assumption:

(A3$^\prime$): For any $v_{0}\in\mathbb{C}^{n}$ equation (\ref{w1}) has a unique strong solution $v^{\varepsilon}(\tau;v_{0})$, $\tau\geq0$, which is equal to $v_{0}$ at $\tau=0$. There exists $m_{0}'>(m_{0}\vee1)$ such that for any $T'\geq0$
\[
\mathbf{E}\sup\limits_{T'\leq\tau\leq T'+1}|v^{\varepsilon}(\tau;v_{0})|^{2m_{0}'}\leq C_{m_{0}'}(|v_{0}|).
\]

We define the (non-resonant) averaging of a vector field $\widetilde{P}\in{\rm{Lip}}_{m_{0}}(\mathbb{C}^{n},\mathbb{C}^{n})$ as
\begin{equation}\label{z7}
\langle\langle \widetilde{P}\rangle\rangle(a)=\frac{1}{(2\pi)^{n}}\int_{\mathbb{T}^{n}}(\Phi_{\omega})\circ \widetilde{P}(\Phi_{-\omega}a){\rm{d}}\omega, \quad \mathbb{T}^{n}=\mathbb{R}^{n}/(2\pi\mathbb{Z}^{n}),
\end{equation}
(see Section 3.1.2 of \cite{HK}). Here for a real vector $\omega=(\omega_{1},\ldots,\omega_{n})$, $\Phi_{\omega}$ is the rotation operator
\begin{equation*}
\Phi_{\omega}:\mathbb{C}^{n}\rightarrow\mathbb{C}^{n}, \quad \Phi_{\omega}={\rm{diag}}\{{\rm{e}}^{i\omega_{1}},\ldots,{\rm{e}}^{i\omega_{n}}\}.
\end{equation*}
For a locally Lipschitz function $f\in{\rm{Lip}}_{m_{0}}(\mathbb{C}^{n},\mathbb{C})$, we define its (non-resonant) averaging $\langle f\rangle$ as
\begin{equation}\label{e2}
\langle f\rangle(a)=\frac{1}{(2\pi)^{n}}\int_{\mathbb{T}^{n}}f(\Phi_{-\omega}a){\rm{d}}\omega
\end{equation}
(note that $\langle f\rangle(a)$ depends only on $(|a_{1}|,\ldots,|a_{n}|)$). Next, we construct the averaged dispersion matrix $B(a)$ for system (\ref{z3}) (cf. Section 7 in \cite{HKP}). To do that, firstly we define the averaging of the diffusion matrix $\Psi\Psi^{*}$ for (\ref{w1}) as
\begin{align}\label{z9}
A(a):=\frac{1}{(2\pi)^{n}}\int_{\mathbb{T}^{n}}({\Phi_{\omega}}\circ\Psi(\Phi_{-\omega}a))
(\Phi_{\omega}\circ\Psi(\Phi_{-\omega}a))^{*}{\rm{d}} \omega.
\end{align}
Then the averaged dispersion matrix $B(a)$ is defined as the principle square root of $A(a)$. That is, $B(a)$ is a Hermitian matrix such that $B(a)^{2}=A(a)$, and $B\geq0$.
\begin{example}\label{e4}
If the dispersion matrix in equation {\rm{(\ref{w1})}} is constant, then
\begin{align*}
A_{kl}=\sum\limits_{j}\frac{1}{(2\pi)^{n}}\int_{\mathbb{T}^{n}}{{\rm {e}}}^{i(\omega_{k}-\omega_{l})}\Psi_{kj}\overline{\Psi_{lj}}{\rm{d}} \omega=\frac{1}{(2\pi)^{n}}\int_{\mathbb{T}^{n}}{{\rm {e}}}^{i(\omega_{k}-\omega_{l})}{\rm{d}} \omega\sum\limits_{j}\Psi_{kj}\overline{\Psi_{lj}}=
\delta_{k-l}\sum\limits_{j}\Psi_{kj}\overline{\Psi_{lj}}.
\end{align*}
So then $A={\rm{diag}}\{b_{1}^{2},\ldots,b_{n}^{2}\}$, $b_{k}^{2}=\sum_{j=1}^{n_{1}}|\Psi_{kj}|^{2}$, and $B={\rm{diag}}\{b_{1},\ldots,b_{n}\}$.
\end{example}
\subsection{Averaged dynamics of actions via the effective equation}
Following \cite{HK}, we define the effective equation for (\ref{z3}) as
\begin{equation}\label{w3}
{\rm{d}} a_{k}(\tau)=\langle\langle P\rangle\rangle_{k}(a(\tau)){\rm{d}} \tau+\sum\limits_{l}B_{kl}(a(\tau)){\rm{d}}\beta_{l}(\tau), \quad k=1,\ldots,n.
\end{equation}
Denote by $I_{k}(a)=\frac{1}{2}|a_{k}|^{2}=\frac{1}{2}|v_{k}|^{2}$ the $k$-th action, and set $I(a)=(I_{1}(a),\ldots,I_{n}(a))$. Usually the actions of solutions for equations (\ref{w1}) are the most important. We now present the averaging theorem for the long time behavior of the actions, proved in \cite{HK} (Sections 4, 8) for any frequency vector $\Lambda$:
\begin{theorem}\label{w4}
Under Assumptions {\rm{A1, A2, A3}}, for any $v_{0}\in\mathbb{C}^{n}$

(i) the effective equation {\rm{(\ref{w3})}} with initial data $v_{0}$ has a unique strong solution $a(\cdot;v_{0})$;

(ii) $\mathcal{D}(a^{\varepsilon}(\cdot;v_{0}))\rightharpoonup \mathcal{D}(a(\cdot;v_{0}))$ \quad in \ $\mathcal{P}(C([0,T];\mathbb{C}^{n}))$ \quad as \ $\varepsilon\rightarrow0$,\\
so

(iii) $\mathcal{D}(I(v^{\varepsilon}(\cdot;v_{0})))\rightharpoonup \mathcal{D}(I(a(\cdot;v_{0})))$ \quad in \ $\mathcal{P}(C([0,T];\mathbb{R}_{+}^{n}))$ \quad as \ $\varepsilon\rightarrow0$,\\
where $a^{\varepsilon}(\tau;v_{0})$ and $v^{\varepsilon}(\cdot;v_{0})$ satisfy equations {\rm{(\ref{z3})}} and {\rm{(\ref{w1})}}, respectively, with the same initial data $v_{0}$, and $I(v^{\varepsilon}(\cdot;v_{0})):=(I_{k}(v^{\varepsilon}))_{1\leq k\leq n}$.
\end{theorem}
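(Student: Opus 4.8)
The plan is to follow the standard four-step scheme for stochastic averaging: establish well-posedness of the effective equation, prove tightness of the laws $\mathcal{D}(a^{\varepsilon})$, identify every limit point as a solution of the effective equation through a martingale problem, and conclude by uniqueness. Part (iii) will then come for free: since $|a_{k}^{\varepsilon}(\tau)|=|v_{k}^{\varepsilon}(\tau)|$ for all $k$, we have $I(v^{\varepsilon})=I(a^{\varepsilon})$ pathwise, so $\mathcal{D}(I(v^{\varepsilon}))=\mathcal{D}(I(a^{\varepsilon}))$, and as the map $I:C([0,T];\mathbb{C}^{n})\to C([0,T];\mathbb{R}_{+}^{n})$ is continuous, (iii) is the image of (ii) under the continuous mapping theorem.

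For (i) I would first check that the averaged coefficients inherit enough regularity. Because each $\Phi_{\omega}$ is a linear isometry and the torus $\mathbb{T}^{n}$ is compact, the averages $\langle\langle P\rangle\rangle$ and $A$ stay in ${\rm{Lip}}_{m_{0}}$. The genuine difficulty is the local Lipschitz regularity of $B=\sqrt{A}$, since the principal square root is in general only H\"older-$\tfrac{1}{2}$ near degenerate matrices. This is exactly where the three cases of (A2) enter: under (i) $\Psi$ is constant, so $A$ and hence $B$ are constant (cf. Example); under (ii) we have $A\geq\alpha E$ uniformly, so the square root is smooth on the relevant region; and under (iii) one invokes the classical fact that the square root of a $C^{2}$, nonnegative-definite matrix function is locally Lipschitz. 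With $\langle\langle P\rangle\rangle$ and $B$ locally Lipschitz and the a priori moment bound (A3) at hand, local existence–uniqueness together with the bound (w8) yields a unique global strong solution on $[0,T]$; by Yamada–Watanabe this also gives uniqueness in law, so the associated martingale problem is well posed.

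For tightness I would use (w8): writing $a^{\varepsilon}$ via the interaction representation (z3) and applying the Burkholder–Davis–Gundy inequality together with (A1) and (w8), one obtains uniform-in-$\varepsilon$ moment bounds of the form $\mathbf{E}|a^{\varepsilon}(\tau)-a^{\varepsilon}(\tau')|^{2p}\leq C|\tau-\tau'|^{p}$ for $p$ up to $m_{0}'>4$. By the Kolmogorov–Chentsov criterion this yields tightness of $\{\mathcal{D}(a^{\varepsilon})\}$ in $\mathcal{P}(C([0,T];\mathbb{C}^{n}))$.

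The heart of the matter, and the main obstacle, is the identification step, where the fast phases must be averaged out and the diffusion must be shown to converge not to $\Psi$ but to the torus-averaged square root $B$. I would pass to the limit in the martingale problem: for a smooth test function, It\^o's formula for (z3) produces a drift built from $e^{is\varepsilon^{-1}\lambda_{k}}P_{k}(v)$ and a second-order term built from $\Psi\Psi^{*}$, both carrying oscillatory factors $e^{is\varepsilon^{-1}(m\cdot\Lambda)}$. Expanding in angular Fourier modes and using the non-resonance condition (w2), each nonzero mode contributes $\int_{t}^{t+s}e^{i\sigma\varepsilon^{-1}(m\cdot\Lambda)}\,d\sigma=O(\varepsilon)$ and disappears, while the zero mode reproduces precisely the averages $\langle\langle P\rangle\rangle_{k}$ and $A=BB^{*}$ of (z7) and (z9). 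Making this Krylov–Bogoliubov averaging rigorous along the tight sequence is the delicate part: one partitions $[0,\tau]$ into short slabs, freezes the slow variable $a$ on each slab, and controls the error via (A1), (w8), and the Lipschitz continuity of the coefficients; the same slab argument applied to the quadratic variation gives $\langle M_{k},M_{l}\rangle_{\tau}\to\int_{0}^{\tau}A_{kl}(a(s))\,ds$, which is exactly the bracket of the effective equation's martingale part. Hence every limit point solves the martingale problem for the generator of (w3), and by the well-posedness from (i) the limit is unique, so the full convergence (ii) follows.
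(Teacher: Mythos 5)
Your scheme is essentially the proof this paper relies on: Theorem \ref{w4} is not proved here but quoted from Sections 4 and 8 of \cite{HK}, where exactly your four steps appear --- tightness of $\{\mathcal{D}(a^{\varepsilon})\}$ from the moment bound (\ref{w8}), identification of limit points via the martingale problem with Krylov--Bogoliubov averaging of the fast phases (in the complex setting one must check both brackets, $N_{k}\overline{N_{l}}$ \emph{and} $N_{k}N_{l}$, as in the paper's own proof of Theorem \ref{z6}), well-posedness of (\ref{w3}) resting on the three cases of (A2) to make $B=\sqrt{A}$ locally Lipschitz (with strong existence obtained via Yamada--Watanabe from the weak solution produced by the limit, since (\ref{w8}) bounds $v^{\varepsilon}$ rather than solutions of the effective equation directly), and (iii) deduced from (ii) through $|a_{k}^{\varepsilon}|\equiv|v_{k}^{\varepsilon}|$ and the continuous mapping theorem. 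One caution on a step that would fail as literally written: your per-mode estimate $\int_{t}^{t+s}{\rm{e}}^{i\sigma\varepsilon^{-1}(m\cdot\Lambda)}{\rm{d}}\sigma=O(\varepsilon)$ is really $O(\varepsilon/|m\cdot\Lambda|)$, and the non-resonance condition (\ref{w2}) gives no uniform lower bound on $|m\cdot\Lambda|$ (small divisors), so summing over all modes of a merely Lipschitz $P$ diverges; one must first approximate by a finite trigonometric sum (or invoke unique ergodicity of the linear torus flow, which gives convergence without a rate), a repair that your slab/freezing argument accommodates and that is how the cited proof proceeds.
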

This result is a stochastic version of the Krylov-Bogoliubov averaging (e.g. see \cite{BM,JKW}), and when in (\ref{w1}) $\Psi=0$, it is equivalent (or at least is very close) to the latter. Certainly \cite{HK} is not the only place where the assertions of Theorem \ref{w4} may be found.
\begin{remark}\label{x5}
The effective equation is not uniquely defined in the sense that there are other stochastic equations for a curve $a(\tau)\in\mathbb{C}^{n}$ such that their solutions satisfy assertions (i) and (iii) of Theorem {\rm{\ref{w4}}}. See Section {\rm{3}} below, where we provide another effective equation with these properties.
\end{remark}
If (A3$^\prime$) holds, then solutions of (\ref{w3}) are defined for all $\tau\geq0$. In this case we will also use another assumption.

\textbf{Assumption }
(A4): The effective equation (\ref{w3}) is mixing with some stationary measure, and for each $M>0$ and any $v^{1}, v^{2}\in \overline{B}_{M}(\mathbb{C}^{n})$
\begin{equation*}
\|\mathcal{D}(a(\tau;v^{1}))-\mathcal{D}(a(\tau;v^{2}))\|^{*}_{L,\mathbb{C}^{n}}\leq g_{M}(\tau),
\end{equation*}
where $g$ is a continuous function of $(M, \tau)$ that tends to zero as $\tau\rightarrow\infty$.

We recall that for any two measures $\mu_{1}$ and $\mu_{2}$ on $\mathbb{C}^{n}$ the dual-Lipschitz distance between them is defined as
\begin{equation*}
\|\mu_{1}-\mu_{2}\|_{L,\mathbb{C}^{n}}^{*}:=\sup\limits_{f\in Lip_{0}(\mathbb{C}^{n},\mathbb{R}), \ \mathcal{C}^{0}(f)\leq1}\big|\int f{\rm{d}}\mu_{1}-\int f{\rm{d}}\mu_{2}\big|
\end{equation*}
(see Notation). Concerning the uniform in time convergence in items (ii) and (iii), in Sections 7, 8 of \cite{HK} the following result is established.
\begin{theorem}\label{w5}
Under Assumptions {\rm{A1, A2, A3$^\prime$, A4}}, for any $v_{0}\in\mathbb{C}^{n}$
\begin{enumerate}
\item $\lim\limits_{\varepsilon\rightarrow0}\sup\limits_{\tau\geq0}\|\mathcal{D}(a^{\varepsilon}(\tau;v_{0}))-\mathcal{D}(a(\tau;v_{0}))\|^{*}_{L,\mathbb{C}^{n}}=0$;
\item
    $\lim\limits_{\varepsilon\rightarrow0}\sup\limits_{\tau\geq0}\|\mathcal{D}(I(v^{\varepsilon}(\tau;v_{0})))-\mathcal{D}(I(a(\tau;v_{0})))\|^{*}_{L,\mathbb{C}^{n}}=0$.
\end{enumerate}
\end{theorem}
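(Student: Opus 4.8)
The plan is to deduce the uniform-in-time estimates from the finite-horizon convergence of Theorem \ref{w4} together with the forgetting-of-the-initial-condition property encoded in Assumption (A4), by a telescoping (``restart'') argument over windows of a fixed length $T$. The genuinely time-homogeneous Markov processes are $v^{\varepsilon}(\tau)$, solving (\ref{w1}), and the effective process $a(\tau)$, solving (\ref{w3}); the interaction-representation process $a^{\varepsilon}(\tau)=\Phi_{\varepsilon^{-1}\tau\Lambda}v^{\varepsilon}(\tau)$ is a deterministic rotation of $v^{\varepsilon}(\tau)$. Since $\Phi_{\omega}$ is unitary (hence an isometry for the dual-Lipschitz distance) and since $\langle\langle P\rangle\rangle$, $A(a)$ and $B(a)$ are equivariant under the rotations $\Phi_{\omega}$ (this follows directly from their torus-averaged definitions together with the rotation-invariance in law of the complex Wiener process), the effective semigroup is rotation-equivariant and I can move the fast rotation out of the way. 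For the action statement (2), where $I(v^{\varepsilon})=I(a^{\varepsilon})$ is rotation-invariant, the rotation disappears entirely, so I describe the argument for (2) and only indicate the bookkeeping needed for (1).

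First I would upgrade Theorem \ref{w4} to hold uniformly over initial data in balls: for fixed $T,M$,
\[
\Upsilon(\varepsilon,T,M):=\sup_{|w|\le M}\ \sup_{0\le\tau\le T}\big\|\mathcal{D}\big(I(v^{\varepsilon}(\tau;w))\big)-\mathcal{D}\big(I(a(\tau;w))\big)\big\|^{*}_{L,\mathbb{R}^{n}_{+}}\xrightarrow[\varepsilon\to0]{}0 .
\]
This follows from the pointwise-in-$w$ convergence of Theorem \ref{w4}, combined with equicontinuity in $w$ uniform in $\varepsilon$, which in turn comes from the continuous dependence of both SDEs on their initial data and from the moment bound (A3$^\prime$) (yielding tightness and a uniform modulus of continuity in $\tau$). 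Now fix $\tau\ge T$ and condition both homogeneous processes at the intermediate time $\tau-T$, writing $\rho^{\varepsilon}=\mathcal{D}(v^{\varepsilon}(\tau-T;v_{0}))$ and $\nu=\mathcal{D}(a(\tau-T;v_{0}))$. For a test function $f$ on $\mathbb{R}^{n}_{+}$ with $\mathcal{C}^{0}(f)\le1$, set $\mathcal{G}^{\varepsilon}(w)=\mathbf{E}\,f(I(v^{\varepsilon}(T;w)))$ and $\mathcal{G}(w)=\mathbf{E}\,f(I(a(T;w)))$, so that by the Markov property
\[
\int f\,{\rm d}\mathcal{D}(I(v^{\varepsilon}(\tau)))-\int f\,{\rm d}\mathcal{D}(I(a(\tau)))=\int(\mathcal{G}^{\varepsilon}-\mathcal{G})\,{\rm d}\rho^{\varepsilon}+\Big(\int\mathcal{G}\,{\rm d}\rho^{\varepsilon}-\int\mathcal{G}\,{\rm d}\nu\Big).
\]

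On $\overline{B}_{M}$ the first integrand is bounded by $\Upsilon(\varepsilon,T,M)$ (finite-horizon convergence, using $I(v^{\varepsilon})=I(a^{\varepsilon})$), while off $\overline{B}_{M}$ it is controlled by the tail mass of $\rho^{\varepsilon}$, which (A3$^\prime$) bounds by $C/M^{2m_{0}'}$ uniformly in $\tau$. For the second bracket I use mixing: by (A4), for $w^{1},w^{2}\in\overline{B}_{M}$ the full laws $\mathcal{D}(a(T;w^{i}))$ lie within $g_{M}(T)$ of each other; since $f\circ I$ is bounded by $1$ and Lipschitz on $\overline{B}_{M}$ with constant $O(M)$ (with a truncation off $\overline{B}_{M}$), the values $\mathcal{G}(w^{1}),\mathcal{G}(w^{2})$ differ by at most $\kappa_{M}(T):=O(M)\,g_{M}(T)+C/M^{2m_{0}'}$. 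Thus $\mathcal{G}$ is within $\kappa_{M}(T)$ of a constant $c$ on $\overline{B}_{M}$; subtracting $c$, the constant cancels because $\rho^{\varepsilon}$ and $\nu$ are both probability measures, and the remainder is $\le 2\kappa_{M}(T)$ plus tail masses. This cancellation is the crux: \emph{no term proportional to the distance at time $\tau-T$ survives}, so errors do not accumulate across the infinitely many windows. Taking the supremum over $f$ gives, uniformly in $\tau\ge T$,
\[
\big\|\mathcal{D}(I(v^{\varepsilon}(\tau)))-\mathcal{D}(I(a(\tau)))\big\|^{*}_{L,\mathbb{R}^{n}_{+}}\le\Upsilon(\varepsilon,T,M)+2\kappa_{M}(T)+C/M^{2m_{0}'}.
\]

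Finally I would combine this with the window $\tau\le T$, where Theorem \ref{w4} gives $\sup_{\tau\le T}\|\cdots\|^{*}_{L}\to0$ as $\varepsilon\to0$ (tightness plus continuity of the limit marginals makes the weak convergence in $C([0,T];\cdot)$ uniform in $\tau$). Balancing the two bounds by choosing $M$ large, then $T$ large (so $g_{M}(T)\to0$), then $\varepsilon$ small yields (2). Statement (1) follows from the same telescoping applied to $v^{\varepsilon}$ and to the effective process started from the rotated datum $\Phi_{\varepsilon^{-1}\tau\Lambda}^{-1}v_{0}$, of the same norm $|v_{0}|$, using that $\Phi$ is a dual-Lipschitz isometry, that the effective semigroup is rotation-equivariant, and a uniform-in-time moment bound for the effective process supplied by the mixing in (A4). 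The step I expect to be most delicate is the uniformity over initial data $\Upsilon(\varepsilon,T,M)\to0$, since Theorem \ref{w4} is stated only pointwise in $v_{0}$; by contrast the conceptually central cancellation in the second bracket, driven by (A4), is technically routine once the tail bounds are in place.
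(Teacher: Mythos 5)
The paper does not actually prove Theorem \ref{w5}: it is quoted from \cite{HK}, Sections 7--8, and the argument there is essentially the scheme you propose --- finite-horizon averaging upgraded to be uniform over initial data in balls, a Markov restart at time $\tau-T$ using time-homogeneity of $v^{\varepsilon}$ and of the effective process, and the mixing assumption (A4) used to make $w\mapsto\mathbf{E}\,f(\,\cdot\,(T;w))$ nearly constant on $\overline{B}_{M}$, so that the unknown discrepancy between $\rho^{\varepsilon}$ and $\nu$ at time $\tau-T$ cancels because both are probability measures; one then balances $M$, then $T$, then $\varepsilon$. Your proposal is therefore the same route as the cited proof, and its structure is sound, including the key observation that no error proportional to the distance at time $\tau-T$ survives (so no accumulation over windows --- in fact a single window suffices, and ``telescoping'' is a slight misnomer), and including the reduction of statement (1) to the homogeneous setting via rotation-equivariance of $\langle\langle P\rangle\rangle$ and $B$ and the fact that each $\Phi_{\omega}$ is an isometry for the dual-Lipschitz distance.

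Two points deserve tightening. First, your justification of $\Upsilon(\varepsilon,T,M)\to0$ by ``continuous dependence of both SDEs on initial data'' implicitly needs the dispersion $B=\sqrt{A}$ of the effective equation to be locally Lipschitz; this does hold in each case of (A2) (trivially for (i), by uniform ellipticity for (ii), and by the classical lemma on square roots of nonnegative $C^{2}$ matrix functions for (iii) --- which is exactly why (A2) is formulated with these three cases), but under (iii) it is cleaner to bypass it with a compactness argument: if uniformity over $\overline{B}_{M}$ failed along $w_{j}\to w$, $\varepsilon_{j}\to0$, then tightness, identification of limit points as martingale solutions, and the weak uniqueness asserted in Theorem \ref{w4}(i) give a contradiction. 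Second, the uniform-in-time tail bound for the effective process is not ``supplied by the mixing in (A4)''; it comes from (A3$'$) and Fatou: the bounds $\mathbf{E}|a^{\varepsilon}(\tau)|^{2m_{0}'}\leq C(|v_{0}|)$ are uniform in $\varepsilon$ and $\tau$ and pass to the weak limit $\mathcal{D}(a(\tau))$. With these two repairs your argument is complete and matches the proof the paper relies on.
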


The assumptions, imposed in the theorems above are not too restrictive. In particular, in \cite{HK}, Proposition 9.4, it is shown that if the dispersion matrix $\Psi$ in (\ref{w1}) is a non-singular constant matrix, the drift satisfies $P\in Lip_{m_{0}}(\mathbb{C}^{n},\mathbb{C}^{n})$ for some $m_{0}\in\mathbb{N}$, and $P(v)\cdot v\leq-\alpha_{1}|v|+\alpha_{2}$ for some constants $\alpha_{1}>0$ and $\alpha_{2}\in\mathbb{R}$, then the assumptions of Theorem \ref{w4} and Theorem \ref{w5} hold.

\subsection{Averaged equation for actions}
By applying It\^{o}'s formula to $I_{k}(v^{\varepsilon}(\tau))$, where $v^{\varepsilon}(\tau)$ solves (\ref{w1}), we obtain
\begin{equation}\label{x8}
{\rm{d}} I_{k}(v^{\varepsilon}(\tau))=v^{\varepsilon}_{k}\cdot P_{k}(v^{\varepsilon}){\rm{d}} \tau+v^{\varepsilon}_{k}\cdot \left(\sum\limits_{l}\Psi_{kl}(v^{\varepsilon}){\rm{d}}\beta_{l}(\tau)\right)+\sum\limits_{l}\left|\Psi_{kl}(v^{\varepsilon})\right|^{2}{\rm{d}}\tau, \quad k=1,\ldots,n,
\end{equation}
and the angles $\varphi_{k}(v^{\varepsilon}(\tau))={\rm{Arg}}(v_{k}^{\varepsilon}(\tau))$ satisfy equations
\begin{equation}\label{x4}
{\rm{d}} \varphi_{k}(v^{\varepsilon}(\tau))=-\varepsilon^{-1}\lambda_{k}{\rm{d}} \tau+ O(1)(\varepsilon\rightarrow0), \quad k=1,\ldots,n.
\end{equation}
So in the action-angle variables $(I,\varphi)$ equations (\ref{w1}) become a fast-slow system. Formally applying the stochastic averaging to equations (\ref{x8}), (\ref{x4}) (see Sections 2, 3 in \cite{HKP}\footnote{Paper \cite{HKP} is written using real coordinates, so its results have to be adjusted to the complex setting. That work deals with perturbations of nonlinear systems, but its results apply to linear systems (\ref{w1}) with non-resonant frequency vectors $\Lambda$.}\ and references therein) we get the averaged equation for the vector of actions
\begin{equation}\label{z2}
{\rm{d}} I(\tau)=F(I){\rm{d}}\tau+K(I){\rm{d}}W(\tau), \quad I(\tau)\in\mathbb{R}_{+}^{n}.
\end{equation}
Here $W(\tau)$ is the standard Wiener process in $\mathbb{R}^{n}$, $F(I)$ is the averaging of the real-valued vector field with components $v_{k}\cdot P_{k}(v)+\sum_{l}\left|\Psi_{kl}(v)\right|^{2}$ in angles $\varphi=(\varphi_{1},\ldots,\varphi_{n})$, and the matrix $K(I)$ is obtained by the rules of stochastic averaging as $K(I)=\sqrt{S(I)}$, where the diffusion matrix $S(I)$ is the averaging in angles $\varphi$ of the real matrix with elements
\begin{align}\label{e3}
\sum\limits_{l}(v_{k}\overline{\Psi_{kl}}(v))\cdot(v_{j}\overline{\Psi_{jl}}(v)).
\end{align}

Now, let us assume that in addition to Assumptions A1, A3 the diffusion matrix $\Psi(v)\Psi^{*}(v)$ in {\rm{(\ref{w1})}} satisfies the uniform ellipticity condition. That is, there exists $\lambda>0$ such that
\begin{align}\label{x7}
\lambda|\xi|^{2}\leq \Psi(v)\Psi^{*}(v)\xi\cdot\xi\leq\lambda^{-1}|\xi|^{2}, \qquad \forall v, \xi\in\mathbb{C}^{n}.
\end{align}
Then, as is shown in \cite{HKP}, Section 6, equation (\ref{z2}) partially describes the limiting dynamics of actions $I_{k}(v^{\varepsilon})$ as $\varepsilon\rightarrow0$ in the following sense:
\begin{prop}\label{z5}
Under the above assumptions, for any $v_{0}\in\mathbb{C}^{n}$ the collection of laws of processes $\{I(v^{\varepsilon}(\tau;v_{0})),\tau\in[0,T]\}$, $0<\varepsilon\leq1$, is tight in $\mathcal{P}(C([0,T],\mathbb{R}_{+}^{n}))$. For any sequence $\varepsilon_{j}\rightarrow0$ such that $\mathcal{D}(I(v^{\varepsilon_{j}}(\cdot;v_{0})))\rightharpoonup Q^{0}\in\mathcal{P}(C([0,T],\mathbb{R}_{+}^{n}))$, the limit $Q^{0}$ is the law of a weak solution $I(\tau)$, $\tau\in[0,T]$, of the averaged equation {\rm{(\ref{z2})}}, equal $I(v_{0})$ at $\tau=0$.
\end{prop}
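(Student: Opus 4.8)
The plan is to prove tightness by moment estimates and then to identify every limit point $Q^0$ as a solution of the martingale problem associated with the averaged equation (\ref{z2}); solving this martingale problem is precisely what it means for $Q^0$ to be the law of a weak solution. \emph{Tightness.} Applying the It\^o formula (\ref{x8}) I write $I(v^\varepsilon(\cdot))$ as the sum of an absolutely continuous drift and a stochastic integral. By (A1) the drift components $v_k\cdot P_k(v)+\sum_l|\Psi_{kl}(v)|^2$ and the dispersion entries $v_k\overline{\Psi_{kl}}(v)$ grow at most polynomially in $|v|$, so the moment bound (\ref{w8}) of (A3) gives, for $0\le s\le t\le T$ and a suitable $p>2$ with $p\le 2m_0'$,
\[
\mathbf{E}\,|I(v^\varepsilon(t))-I(v^\varepsilon(s))|^{p}\le C(|v_0|,T)\,|t-s|^{p/2},
\]
using H\"older's inequality on the drift and the Burkholder--Davis--Gundy inequality on the martingale part. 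Combined with the uniform control of the marginals coming again from (\ref{w8}), the Kolmogorov--Chentsov criterion yields tightness of $\{\mathcal D(I(v^\varepsilon(\cdot;v_0)))\}_{0<\varepsilon\le1}$ in $\mathcal P(C([0,T],\mathbb R_+^n))$, which is the first assertion.

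\emph{The martingale problem.} Let $\mathcal A$ be the generator of (\ref{z2}),
\[
\mathcal A g(I)=\sum_k F_k(I)\,\partial_{I_k}g(I)+\tfrac12\sum_{k,j}S_{kj}(I)\,\partial^2_{I_k I_j}g(I),\qquad g\in C_c^2(\mathbb R_+^n),
\]
and let $\mathcal G$ be the instantaneous generator of $I(v^\varepsilon(\cdot))$ read off from (\ref{x8}): its first-order coefficient is $v_k\cdot P_k(v)+\sum_l|\Psi_{kl}(v)|^2$ and its second-order coefficient is the matrix (\ref{e3}). By the very definition of $F$ and $S$ as the angle-averages of these coefficients, and since $\partial g$, $\partial^2 g$ depend only on the angle-independent variable $I$, the difference $\mathcal G g(v)-\mathcal A g(I(v))$ has vanishing average over the angles $\varphi$. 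It\^o's formula shows that, for each fixed $\varepsilon$, the process $g(I(v^\varepsilon(\tau)))-g(I(v_0))-\int_0^\tau \mathcal G g(v^\varepsilon(s))\,\mathrm{d}s$ is a martingale.

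\emph{Averaging.} The crux is the bound
\[
\mathbf{E}\sup_{0\le\tau\le T}\Big|\int_0^\tau\big[\mathcal G g(v^\varepsilon(s))-\mathcal A g(I(v^\varepsilon(s)))\big]\,\mathrm{d}s\Big|\xrightarrow[\varepsilon\to0]{}0,
\]
which is exactly the averaging estimate established in \cite{HKP} (Sections 2, 3) and \cite{HK}. Heuristically its mechanism is that $\mathcal G g-\mathcal A g\circ I$, having zero angle-average, expands in nonzero harmonics $\sum_{m\ne0}c_m(I)\mathrm e^{im\cdot\varphi}$; by the angle equations (\ref{x4}) the phase $m\cdot\varphi$ rotates with leading rate $-\varepsilon^{-1}(m\cdot\Lambda)$, nonzero by the non-resonance condition (\ref{w2}), so integrating each harmonic by parts in the It\^o sense produces a factor $\varepsilon$ together with a remainder controlled by (\ref{w8}). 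I expect this to be the main obstacle, for two reasons: the small divisors $1/(m\cdot\Lambda)$ must be summed against Fourier coefficients that are only as regular as (A1) permits, so the naive expansion must be replaced by the robust estimate of \cite{HKP}; and, more seriously, the correctors degenerate on the singular set $\{\exists k:\,I_k=0\}$ where the angles are undefined and $\varphi_k$ moves erratically. It is precisely here that the uniform ellipticity (\ref{x7}) enters, bounding the expected time that $v^\varepsilon$ spends near this set and rendering its contribution negligible.

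\emph{Passage to the limit.} Combining the exact martingale of the second step with the averaging bound, the process $g(I(v^\varepsilon(\tau)))-g(I(v_0))-\int_0^\tau \mathcal A g(I(v^\varepsilon(s)))\,\mathrm{d}s$ is a martingale up to an additive error tending to $0$ in $L^1$. Hence for any $0\le s_1\le\cdots\le s_r\le s\le\tau\le T$, bounded continuous $h:(\mathbb R_+^n)^r\to\mathbb R$ and $g\in C_c^2(\mathbb R_+^n)$, the defining martingale identity survives the weak limit along $\varepsilon_j\to0$ (the integrand $\mathcal A g$ is bounded since $g$ has compact support and $F,S$ are continuous, which supplies the uniform integrability). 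Therefore under $Q^0$ the canonical process makes $g(I(\tau))-g(I(0))-\int_0^\tau\mathcal A g(I(s))\,\mathrm{d}s$ a martingale for every such $g$, so $Q^0$ solves the martingale problem for (\ref{z2}) with $I(0)=I(v_0)$; equivalently, $Q^0$ is the law of a weak solution of the averaged equation, as claimed.
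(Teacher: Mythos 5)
Your proposal cannot be checked against a proof in the paper, because the paper does not prove Proposition \ref{z5} at all: it states the result and attributes it to \cite{HKP}, Section 6 (with the remark that \cite{FW} gives the assertions away from the set $\{v:\ |v_{k}|\geq\delta\}$ by a different argument). What you have written is a reconstruction of the standard scheme that the cited source follows: tightness of $\{\mathcal{D}(I(v^{\varepsilon}(\cdot;v_{0})))\}$ via moment bounds on increments plus Prokhorov, identification of every limit point as a solution of the martingale problem for the generator of (\ref{z2}), with the crux being the averaging estimate that kills the zero-mean (in the angles) part of the generator, and with the uniform ellipticity (\ref{x7}) controlling the time spent near the singular set $\{\exists k:\ I_{k}=0\}$ where the angle variables degenerate -- this last point is exactly the role (\ref{x7}) plays in the paper's own use of Krylov's estimate (Lemma \ref{w7}) in the proof of Theorem \ref{z6}. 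So your outline is sound and consistent with the intended argument; note however that, like the paper, you cite rather than prove the key averaging bound, so the proposal is a correct scaffolding around an imported core, not a self-contained proof.

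Two technical points deserve attention. First, your claimed increment estimate $\mathbf{E}|I(v^{\varepsilon}(t))-I(v^{\varepsilon}(s))|^{p}\leq C|t-s|^{p/2}$ for some $p>2$ does not follow directly from (\ref{w8}): the drift component $v_{k}\cdot P_{k}(v)$ grows like $(1+|v|)^{m_{0}+1}$, so the H\"older step needs moments of order $p(m_{0}+1)$, while (A3) only guarantees order $2m_{0}'$ with $m_{0}'>(m_{0}\vee4)$; for large $m_{0}$ this forces $p\leq 2m_{0}'/(m_{0}+1)$, which can be below $2$. Under (\ref{x7}) the dispersion entries of (\ref{x8}) are controlled (so the martingale part is fine), but for the drift you should localize with stopping times $\tau_{R}=\inf\{\tau:\,|v^{\varepsilon}(\tau)|\geq R\}$, prove tightness of the stopped processes, and remove the cutoff using $\mathbf{P}(\sup_{\tau\leq T}|v^{\varepsilon}(\tau)|\geq R)\to0$ uniformly in $\varepsilon$, which (\ref{w8}) does give -- this is routine but not optional as written. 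Second, in the final step you pass from ``$Q^{0}$ solves the martingale problem for $\mathcal{A}$'' to ``$Q^{0}$ is the law of a weak solution of (\ref{z2})''; this equivalence requires representing the martingale part through $K(I)=\sqrt{S(I)}$ on an extended probability space, which is standard (Stroock--Varadhan) given that $S$ is continuous, symmetric and nonnegative, but should be said, since the continuity of $F$ and $S$ on all of $\mathbb{R}^{n}_{+}$ (including the boundary, where they are not Lipschitz, as the paper itself stresses) is exactly what makes both this step and your uniform-integrability argument work.
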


Naturally if equation (\ref{z2}) with the initial condition $I(0)=I(v_{0})$ has a unique solution, then the measure $Q^{0}$ is its law, and Proposition \ref{z5} implies
\begin{coro}\label{x9}
Under the assumptions of Proposition {\rm{\ref{z5}}}, if equation {\rm{(\ref{z2})}} has a unique solution $I(\tau)$ such that $I(0)=I(v_{0})$, then in Proposition {\rm{\ref{z5}}} the measure $Q^{0}$ is its law, and the convergence there holds as $\varepsilon\rightarrow0$.
\end{coro}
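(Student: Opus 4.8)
The plan is to deduce the corollary from Proposition \ref{z5} by the standard subsequence argument for weak convergence, using the uniqueness hypothesis to identify all subsequential limits. Since $C([0,T],\mathbb{R}_{+}^{n})$ is a Polish space, the space $\mathcal{P}(C([0,T],\mathbb{R}_{+}^{n}))$ equipped with the topology of weak convergence is metrizable; hence a family of laws converges weakly to a limit $\mathcal{Q}$ as $\varepsilon\rightarrow0$ if and only if every sequence $\varepsilon_{j}\rightarrow0$ has a subsequence along which the laws converge weakly, and every such subsequential limit equals $\mathcal{Q}$.

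First I would invoke the tightness part of Proposition \ref{z5}: the family $\{\mathcal{D}(I(v^{\varepsilon}(\cdot;v_{0})))\}_{0<\varepsilon\leq1}$ is tight, hence by Prokhorov's theorem relatively compact in the weak topology. Thus any sequence $\varepsilon_{j}\rightarrow0$ admits a subsequence $\varepsilon_{j_{k}}$ along which $\mathcal{D}(I(v^{\varepsilon_{j_{k}}}(\cdot;v_{0})))\rightharpoonup Q^{0}$ for some $Q^{0}\in\mathcal{P}(C([0,T],\mathbb{R}_{+}^{n}))$. By the second assertion of Proposition \ref{z5}, this $Q^{0}$ is the law of a weak solution of the averaged equation (\ref{z2}) with initial datum $I(v_{0})$.

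Next I would use the uniqueness assumption. Let $\mathcal{Q}$ denote the law of the assumed unique solution of (\ref{z2}) with $I(0)=I(v_{0})$. Since every subsequential limit $Q^{0}$ is the law of such a solution, and the solution is unique in law, every subsequential limit equals $\mathcal{Q}$. Combined with relative compactness, the subsequence characterization of weak convergence then yields $\mathcal{D}(I(v^{\varepsilon}(\cdot;v_{0})))\rightharpoonup\mathcal{Q}$ as $\varepsilon\rightarrow0$; in particular the limit $Q^{0}$ in Proposition \ref{z5} is this common law and is attained along the full family, which is exactly the assertion.

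The only point that requires care is the precise meaning of \emph{unique solution}. The argument needs uniqueness in law (weak uniqueness) among weak solutions starting at $I(v_{0})$, since the subsequential limits are a priori only laws of weak solutions. If the hypothesis is read as pathwise (strong) uniqueness, I would first combine it with the weak existence already furnished by Proposition \ref{z5} and apply the Yamada--Watanabe theorem to upgrade pathwise uniqueness to uniqueness in law; the remainder of the argument is then unchanged. This identification of all subsequential limits is the main, and essentially the only, substantive step, as the tightness and compactness inputs are supplied directly by Proposition \ref{z5} and Prokhorov's theorem.
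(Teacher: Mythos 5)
Your proposal is correct and follows exactly the route the paper intends: the paper states Corollary \ref{x9} as an immediate consequence of Proposition \ref{z5}, namely the standard tightness-plus-uniqueness argument in which every subsequential limit is identified with the law of the unique solution of (\ref{z2}), forcing convergence of the full family. Your remark on upgrading pathwise uniqueness to uniqueness in law via Yamada--Watanabe is a sensible clarification consistent with the paper's own later use of Theorem 1 of \cite{YW}.
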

We note that results of work \cite{FW} (obtained by an argument, different from that in \cite{HKP}) imply assertions of Proposition \ref{z5} and Corollary \ref{x9} for solutions of (\ref{w1}) till they stay in a domain $\{v: |v_{k}|\geq\delta\}$, for any fixed $\delta>0$.

A-priori the coefficients of equation (\ref{z2}) are not locally Lipschitz functions of $I$. But as is shown in \cite{HK}, Section 6, if the coefficients of equation (\ref{w1}) are $C^{2}$-smooth, then in view of a result of Whitney \cite{W} the drift $F(I)$ in equation (\ref{z2}) is a $C^{1}$-function of $I$. Same argument shows that in this case the diffusion matrix $S(I)$ also is $C^{1}$-smooth. It degenerates when $I_{k}$ vanishes, so $K(I)=\sqrt{S(I)}$ is only a H\"{o}lder-$\frac{1}{2}$ continuous matrix function. The uniqueness for equations (\ref{z2}) with such dispersion matrices $K$ is a delicate question. If in equation (\ref{w1}) $\Psi$ is a constant matrix, then the elements of matrix $S(v)$ (see (\ref{e3})) are given by the averaging over $\varphi\in\mathbb{T}^{n}$ of functions $\sum_{l}\left(\sqrt{2I_{k}}{\rm{e}}^{i\varphi_{k}}\overline{\Psi_{kl}}\right)\cdot
\left(\sqrt{2I_{j}}{\rm{e}}^{i\varphi_{j}}\overline{\Psi_{jl}}\right)$, which equal
\begin{equation*}
\delta_{k-j}2I_{k}b_{k}^{2}, \quad b_{k}^{2}=\sum_{l}|\Psi_{kl}|^{2}\geq0,
\end{equation*}
(cf. Example \ref{e4}). So $K(I)={\rm{diag}}\{b_{k}\sqrt{2I_{k}}\}$. In this case, according to Theorem 1 in \cite{YW}, equation (\ref{z2}) with a prescribed $I(0)\in\mathbb{R}_{+}^{n}$ has a unique solution and Corollary \ref{x9} applies. When the matrix $\Psi$ is not constant, $K(I)=\sqrt{S(I)}$ is a matrix function with complicated singularities at $\partial \mathbb{R}_{+}^{n}$, and we are not aware of any result which would imply the uniqueness for equation (\ref{z2}). On the contrary, S. Watanabe and T. Yamada in \cite{WY} provided examples of equations (\ref{z2}) with some matrices $K(I)$ which degenerate at $\partial\mathbb{R}_{+}^{n}$ and are H\"{o}lder\text{-}$\frac{1}{2}$ continuous, for which a solution of (\ref{z2}) with a prescribed $I(0)$ is not unique.

Thus available results allow to use the averaged equation (\ref{z2}) to describe the limiting dynamics of actions $I_{k}(v^{\varepsilon}(\tau))$ if (apart from A1 and A3) the drift $P(v)$ in equation (\ref{w1}) is $C^{2}$-smooth and the dispersion $\Psi$ is a constant non-degenerate matrix. At the same time, due to item (ii) of Theorem \ref{w4}, the effective equation (\ref{w3}) describes the limiting dynamics if, apart from A1 and A3, the mild restriction A2 holds. Moreover, direct calculation in \cite{HKP}, Proposition 7.3 shows that:
\begin{prop}\label{z8}
The action vector $I(a(\tau))$ of a solution for {\rm{(\ref{w3})}} is a weak solution for equation {\rm{(\ref{z2})}}.
\end{prop}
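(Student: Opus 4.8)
The plan is to derive the equation governing $I(a(\tau))$ by applying It\^{o}'s formula to the components $I_k(a)=\tfrac12|a_k|^2$ along a solution $a(\tau)$ of the effective equation (\ref{w3}), and then to verify that the resulting drift and quadratic variation coincide with the coefficients $F$ and $S=KK^{\mathrm T}$ of the averaged equation (\ref{z2}). Since $I_k$ is exactly the function to which It\^{o}'s formula was applied in (\ref{x8}), and since (\ref{w3}) carries no fast rotation term, the same computation (with $P$ replaced by $\langle\langle P\rangle\rangle$ and $\Psi$ by $B$) gives
\begin{equation*}
{\rm d} I_k(a) = \Big(a_k\cdot\langle\langle P\rangle\rangle_k(a)+\sum_l|B_{kl}(a)|^2\Big){\rm d}\tau + a_k\cdot\Big(\sum_l B_{kl}(a){\rm d}\beta_l\Big),\qquad k=1,\dots,n.
\end{equation*}
It then remains to identify the two bracketed expressions.

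For the drift I would first rewrite $a_k\cdot\langle\langle P\rangle\rangle_k(a)$, using $a_k\cdot z=\mathfrak{R}(a_k\bar z)$ and the substitution $b=\Phi_{-\omega}a$ (so that $a_k{\rm e}^{-i\omega_k}=b_k$), as the angle-average $\langle\, v_k\cdot P_k(v)\,\rangle(a)$ in the sense of (\ref{e2}); here the phase ${\rm e}^{i\omega_k}$ coming from $\Phi_{\omega}$ in (\ref{z7}) cancels the ${\rm e}^{-i\omega_k}$ coming from $a_k$. Since $B$ is Hermitian, $\sum_l|B_{kl}|^2=(BB^*)_{kk}=(B^2)_{kk}=A_{kk}$, and evaluating (\ref{z9}) on the diagonal gives $A_{kk}(a)=\big\langle\,\sum_l|\Psi_{kl}(v)|^2\,\big\rangle(a)$, the phases of $\Phi_{\omega}$ dropping out of the modulus. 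Adding the two, the drift of $I_k$ equals $\big\langle\, v_k\cdot P_k(v)+\sum_l|\Psi_{kl}(v)|^2\,\big\rangle(a)$, which is precisely $F_k(I(a))$ by the definition of $F$ following (\ref{z2}); in particular it is a function of the actions alone.

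For the diffusion I would compute the quadratic variation of the martingale part $M_k:=a_k\cdot(\sum_l B_{kl}\,{\rm d}\beta_l)$. Using the It\^{o} rules ${\rm d}\beta_l\,{\rm d}\overline{\beta_m}=2\delta_{lm}\,{\rm d}\tau$ and ${\rm d}\beta_l\,{\rm d}\beta_m=0$ for the complex Wiener process, a short calculation yields ${\rm d}\langle M_k,M_j\rangle=\mathfrak{R}\big(a_k\overline{a_j}\,A_{jk}(a)\big)\,{\rm d}\tau$. On the other hand, evaluating the angle-average (\ref{e3}) after the same substitution $b=\Phi_{-\omega}a$ shows that the $(k,j)$ entry of the averaged matrix $S$ also equals $\mathfrak{R}\big(a_k\overline{a_j}\,A_{jk}(a)\big)$, the key point being that the phase factor ${\rm e}^{i(\omega_j-\omega_k)}=(\Phi_{-\omega}a)_k\overline{(\Phi_{-\omega}a)_j}\big/(a_k\overline{a_j})$ produced by the rotation is exactly the factor carried by the off-diagonal entries of (\ref{z9}). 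Hence ${\rm d}\langle M_k,M_j\rangle=S_{kj}(I(a))\,{\rm d}\tau$.

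Consequently $I(a(\tau))$ solves ${\rm d} I=F(I)\,{\rm d}\tau+{\rm d}M$, where $M$ is a continuous martingale with $\langle M_k,M_j\rangle_\tau=\int_0^\tau S_{kj}(I(a(s)))\,{\rm d}s$ and $S=KK^{\mathrm T}$, $K=\sqrt{S}$. To conclude that this is a weak solution of (\ref{z2}) I would invoke the standard martingale representation theorem: since the bracket of $M$ is $\int KK^{\mathrm T}\,{\rm d}\tau$, on a (possibly enlarged) probability space there is a standard real Wiener process $W$ with ${\rm d}M=K(I)\,{\rm d}W$. I expect the delicate point to be not the matchings above, which are direct once the complex It\^{o} rules and the cancellation of the rotation phases are set up correctly, but rather this final step: because $K=\sqrt{S}$ is only H\"{o}lder-$\tfrac12$ and degenerates on $\partial\mathbb{R}^n_+$, one must use the version of the representation that admits a degenerate diffusion matrix (building $W$ from the pseudo-inverse of $K$ together with an auxiliary independent Wiener process), which is precisely why only a \emph{weak} solution is asserted.
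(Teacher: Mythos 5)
Your proposal is correct and coincides with the paper's own argument: the paper gives no in-text proof but attributes the proposition to the ``direct calculation'' of \cite{HKP}, Proposition 7.3, which is precisely what you carry out --- It\^{o}'s formula for $I_{k}(a)=\frac{1}{2}|a_{k}|^{2}$ along solutions of (\ref{w3}), the phase-cancellation identities matching the drift with $F$ (using $B$ Hermitian, $\sum_{l}|B_{kl}|^{2}=A_{kk}$) and the bracket $\mathfrak{R}\big(a_{k}\overline{a_{j}}A_{jk}(a)\big)$ with $S_{kj}(I(a))$, followed by the martingale representation theorem to produce $W$. Your closing remark is also on target: since $K=\sqrt{S}$ degenerates on $\partial\mathbb{R}^{n}_{+}$, one must use the representation theorem in its degenerate form (enlarging the probability space and building $W$ via the pseudo-inverse of $K$ plus an independent auxiliary Wiener process), which is exactly why the conclusion is a \emph{weak} solution of (\ref{z2}).
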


\section{Averaging theorem for the modified effective equation}
Recall that for a complex function $f(z)$ of a complex variable $z=x+iy$, its derivatives $\frac{\partial f}{\partial z}$ and $\frac{\partial f}{\partial \bar{z}}$ are defined as $\frac{\partial f}{\partial z}=\frac{1}{2}(\frac{\partial f}{\partial x}-i\frac{\partial f}{\partial y})$ and $\frac{\partial f}{\partial \bar{z}}=\frac{1}{2}(\frac{\partial f}{\partial x}+i\frac{\partial f}{\partial y})$. A vector field $\widetilde{P}(v)$ in $\mathbb{C}^{n}$ is called \textit{hamiltonian} if
\[
\widetilde{P}_{k}(v)=i\frac{\partial}{\partial \bar{v}_{k}}h(v), \quad k=1,\ldots,n,
\]
where $h(v)$ is a $C^{1}$\text{-}smooth real function, called the Hamiltonian of $\widetilde{P}$.
In this case the $k$-th component of the averaged field is
\begin{align*}
\langle\langle \widetilde{P}\rangle\rangle_{k}(v)=\frac{1}{(2\pi)^{n}}\int_{\mathbb{T}^{n}}{\rm{e}}^{i\omega_{k}}i(\frac{\partial}{\partial \bar{v}_{k}}h)(\Phi_{-\omega}v){\rm{d}}\omega=\frac{i}{(2\pi)^{n}}\int_{\mathbb{T}^{n}}\frac{\partial}{\partial \bar{v}_{k}}h(\Phi_{-\omega}v){\rm{d}}\omega=i\frac{\partial}{\partial \bar{v}_{k}}\langle h\rangle(v)
\end{align*}
(we recall (\ref{z7}) and (\ref{e2})). So $\langle\langle \widetilde{P}\rangle\rangle$ is a hamiltonian field with the averaged Hamiltonian $\langle h\rangle$. For any $k$ consider the $k$-th complex coordinate $v_{k}=x_{k}+iy_{k}=r_{k}{\rm{e}}^{i\varphi_{k}}$ and replace it with two real coordinates $I_{k}=\frac{1}{2}r_{k}^{2}$ and $\varphi_{k}$. Since $\langle h\rangle$ does not depend on $\varphi_{k}$, then we have
\begin{align*}
\frac{\partial}{\partial \bar{v}_{k}}\langle h\rangle(v)=\frac{1}{2} \left(x_{k}\frac{\partial\langle h\rangle}{\partial I_{k}}+iy_{k}\frac{\partial\langle h\rangle}{\partial I_{k}}\right)=\frac{1}{2}\frac{\partial\langle h\rangle}{\partial I_{k}}\left(x_{k}+iy_{k}\right).
\end{align*}
So $(i\frac{\partial}{\partial \bar{v}_{k}}\langle h\rangle(v))\cdot v_{k}\equiv 0$ for each $k$. Thus, if in equation (\ref{w1})
\begin{equation*}
P=P^{1}+P^{2},
\end{equation*}
where the vector filed $P^{2}$ is hamiltonian, then $P^{2}$ gives no contribution to equation (\ref{x8}), as well as to the averaged equation for actions (\ref{z2}). So if Corollary \ref{x9} applies, then the hamiltonian component of the drift term $P(v)$ does not affect the long time dynamics of actions $I(v^{\varepsilon})$. But Corollary \ref{x9} is proved only for a small class of equations (\ref{w1}). Our goal in this section is to show that the same conclusion concerning the hamiltonian component of $P(v)$ holds if the dispersion matrix $\Psi(v)$ is uniformly elliptic and we use Theorem \ref{w4}.(iii) and Theorem \ref{w5}.(ii) to describe the limiting dynamics of the actions via the effective equation (\ref{w3}). For this end, apart from equation (\ref{w3}), we consider a modified effective equation
\begin{equation}\label{w6}
{\rm{d}} a_{k}(\tau)=\langle\langle P^{1}\rangle\rangle_{k}(a(\tau)){\rm{d}} \tau+\sum\limits_{l}B_{kl}(a(\tau)){\rm{d}}\beta_{l}(\tau), \quad k=1,\ldots,n,
\end{equation}
where $\langle\langle P^{1}\rangle\rangle$ is the averaging of the non-hamiltonian part $P^{1}$ of the vector filed $P$. This is an effective equation for equation (\ref{w1}), where $P(v)$ is replaced by $P^{1}(v)$. So if Assumptions A1-A3 hold for the latter equation, then by Theorem \ref{w4}.(i) equation (\ref{w6}) is well-posed. In the following, we use the left superscript ${}^1\cdot$ to indicate that we are considering the modified effective equation. For example, we denote by $^{1}a(\tau)$ and $^{1}I(\tau)=I(^{1}a(\tau))$ a solution of equation (\ref{w6}) and its action-vector. Then we have

\begin{theorem}\label{z6}
If the dispersion matrix $\Psi$ in equation {\rm{(\ref{w1})}} satisfy {\rm{(\ref{x7})}}, and Assumptions {\rm{A1, A3}} hold for equation {\rm{(\ref{w1})}} as well as for that equation with $P$ replaced by $P^{1}$, then assertions (i) and (iii) of Theorem {\rm{\ref{w4}}} hold for solutions of the modified effective equation {\rm{(\ref{w6})}}. In addition, if Assumptions {\rm{A3$^\prime$, A4}} are valid for both equations {\rm{(\ref{w1})}} and {\rm{(\ref{w1})}}$_{P:=P^{1}}$, then assertion (ii) of Theorem {\rm{\ref{w5}}} stays true for equation {\rm{(\ref{w6})}} as well.
\end{theorem}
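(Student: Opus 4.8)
The plan is to deduce all of the claimed assertions from a single fact: the action process of the effective equation \eqref{w3} and that of the modified effective equation \eqref{w6}, started from the same $v_{0}$, have the same law. First observe that the lower bound in \eqref{x7} gives $\Psi(v)\Psi^{*}(v)\geq\lambda E$, i.e.\ Assumption A2(ii); hence A1, A2, A3 hold both for \eqref{w1} and for the equation obtained by replacing $P$ with $P^{1}$, so Theorem \ref{w4} applies to each. In particular \eqref{w6} has a unique strong solution, which is assertion (i) for \eqref{w6}. Write $a$ and ${}^{1}a$ for the strong solutions of \eqref{w3} and \eqref{w6} issued from $v_{0}$. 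Once we show
\begin{equation*}
\mathcal{D}\big(I(a(\cdot))\big)=\mathcal{D}\big(I({}^{1}a(\cdot))\big),
\end{equation*}
assertion (iii) for \eqref{w6} follows by combining it with Theorem \ref{w4}.(iii) for the $P$-equation, namely $\mathcal{D}(I(v^{\varepsilon}))\rightharpoonup\mathcal{D}(I(a))=\mathcal{D}(I({}^{1}a))$; and under the additional hypotheses A3$'$, A4 the analogous marginal identity at each fixed $\tau\geq0$, combined with Theorem \ref{w5}.(ii) for the $P$-equation, yields assertion (ii) of Theorem \ref{w5} for \eqref{w6}.

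The identity will be produced by a random rotation that removes the hamiltonian part of the drift while leaving the actions untouched. Decompose $P=P^{1}+P^{2}$ with $P^{2}$ hamiltonian of Hamiltonian $h$. As computed before the statement,
\begin{equation*}
\langle\langle P^{2}\rangle\rangle_{k}(a)=i\frac{\partial}{\partial\bar a_{k}}\langle h\rangle(a)=\frac{i}{2}\,g_{k}(I(a))\,a_{k},\qquad g_{k}:=\frac{\partial\langle h\rangle}{\partial I_{k}},
\end{equation*}
a real-rate rotation of $a_{k}$ that is tangent to the tori $\{I=\mathrm{const}\}$. Since $\langle\langle P^{2}\rangle\rangle=\langle\langle P\rangle\rangle-\langle\langle P^{1}\rangle\rangle$ is locally Lipschitz and its $k$-th component is proportional to $a_{k}$, the rate $g_{k}$ is bounded on bounded sets. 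Given the solution $a$ of \eqref{w3}, I set
\begin{equation*}
\theta_{k}(\tau)=\tfrac12\int_{0}^{\tau}g_{k}\big(I(a(s))\big)\,\mathrm{d}s,\qquad b(\tau):=\Phi_{-\theta(\tau)}\,a(\tau),
\end{equation*}
which is an adapted, continuous, finite-variation angle; because $a$ has a.s.\ continuous paths and $g_{k}$ is bounded on bounded sets, $\theta_{k}$ is a.s.\ finite on each interval $[0,\tau]$. Since rotations preserve the moduli $|a_{k}|$, we have $I(b(\tau))=I(a(\tau))$ for all $\tau$.

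It remains to check that $b$ solves \eqref{w6}. By It\^o's formula (the $\theta_{k}$ carry no quadratic variation) the extra drift $\frac{i}{2}g_{k}(I(a))a_{k}$ cancels exactly against the term $-i\dot\theta_{k}b_{k}$ coming from the rotation, leaving $\mathrm{d}b_{k}=\mathrm{e}^{-i\theta_{k}}\langle\langle P^{1}\rangle\rangle_{k}(a)\,\mathrm{d}\tau+\mathrm{e}^{-i\theta_{k}}\sum_{l}B_{kl}(a)\,\mathrm{d}\beta_{l}$. The substitution $\omega\mapsto\omega-\theta$ in \eqref{z7} and \eqref{z9} yields the rotational equivariance
\begin{equation*}
\langle\langle P^{1}\rangle\rangle(\Phi_{\theta}b)=\Phi_{\theta}\langle\langle P^{1}\rangle\rangle(b),\qquad A(\Phi_{\theta}b)=\Phi_{\theta}A(b)\Phi_{\theta}^{*},\qquad B(\Phi_{\theta}b)=\Phi_{\theta}B(b)\Phi_{\theta}^{*},
\end{equation*}
the last because conjugation by the unitary $\Phi_{\theta}$ commutes with the principal square root. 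Writing $a=\Phi_{\theta}b$ and substituting, the factors $\mathrm{e}^{\pm i\theta_{k}}$ collapse and
\begin{equation*}
\mathrm{d}b_{k}=\langle\langle P^{1}\rangle\rangle_{k}(b)\,\mathrm{d}\tau+\sum_{l}B_{kl}(b)\,\mathrm{d}\widetilde\beta_{l},\qquad \widetilde\beta_{l}(\tau):=\int_{0}^{\tau}\mathrm{e}^{-i\theta_{l}(s)}\,\mathrm{d}\beta_{l}(s).
\end{equation*}
As multiplication by $\mathrm{e}^{-i\theta_{l}}$ acts as an adapted orthogonal map on $(\beta_{l}^{R},\beta_{l}^{I})$, L\'evy's characterization shows that $\widetilde\beta=(\widetilde\beta_{1},\dots,\widetilde\beta_{n_{1}})$ is again a standard complex Wiener process in $\mathbb{C}^{n_{1}}$. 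Thus $b$ is a weak solution of \eqref{w6} with datum $v_{0}$; since \eqref{w6} is strongly well-posed (Theorem \ref{w4}.(i) for the $P^{1}$-equation), uniqueness in law gives $\mathcal{D}(b)=\mathcal{D}({}^{1}a)$, whence $\mathcal{D}(I(a))=\mathcal{D}(I(b))=\mathcal{D}(I({}^{1}a))$. The same argument read at a fixed $\tau$ supplies the marginal identity needed in the A3$'$, A4 case.

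I expect the main obstacle to be the legitimacy of the rotation near $\partial\mathbb{R}^{n}_{+}$: one must confirm that the rate $g_{k}=\partial\langle h\rangle/\partial I_{k}$ stays bounded on bounded sets up to the faces $\{I_{k}=0\}$ where the actions vanish, which I read off from the local Lipschitz continuity of $\langle\langle P^{2}\rangle\rangle$ together with the proportionality $\langle\langle P^{2}\rangle\rangle_{k}\propto a_{k}$, and the verification, through L\'evy's theorem, that the rotated noise $\widetilde\beta$ is a Wiener process for the original filtration. The equivariance identities and the It\^o cancellation are then routine computations.
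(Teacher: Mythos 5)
Your proposal is correct, and it takes a genuinely different route from the paper's proof. The paper never conjugates the solution of (\ref{w3}) by a continuously varying rotation: instead it runs a surgery argument --- cut the dynamics off at $|a(\tau)|^{2}\geq R$, alternate on random time intervals between solving the modified equation (\ref{w6})$_{R}$ (while all actions exceed $\delta$) and following a \emph{statically} rotated copy $\Phi_{\theta(\tau_{j}^{-})}a_{R}(\tau)$ of the solution of (\ref{w3})$_{R}$ (while some action is below $2\delta$), prove the actions of the two equations coincide there via pathwise uniqueness for the averaged action equation (\ref{z2}) in the region $\{I_{j}\geq\delta\ \forall j\}$ where its coefficients are Lipschitz (Lemma \ref{w9}), control the time spent near $\partial\mathbb{R}^{n}_{+}$ by Krylov's occupation-time estimate (Lemma \ref{w7}, which is where the ellipticity (\ref{x7}) genuinely enters), and then identify the $\delta\to0$ and $R\to\infty$ limits through tightness and the martingale problem (Lemmas \ref{x3}, \ref{z4} and Step 4). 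Your single adapted rotation $b=\Phi_{-\theta}a$, $\dot\theta_{k}=\frac12 g_{k}(I(a))$, replaces all of this and buys more: a \emph{pathwise} coupling $I(b)\equiv I(a)$ on one probability space, no cut-offs or double limit, and no use of (\ref{x7}) beyond securing A2(ii) --- so your argument would in fact go through under any of the alternatives in A2, while the paper's Lemma \ref{w7} requires uniform ellipticity. The trade-off is that the paper's scheme only uses that the two averaged drifts induce the same action equation (\ref{z2}), whereas yours exploits the specific rotational form of $\langle\langle P^{2}\rangle\rangle$; within the scope of this theorem that form is exactly what the computation preceding the statement provides, so nothing is lost. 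The final appeal to uniqueness in law for (\ref{w6}) (pathwise uniqueness from Theorem \ref{w4}.(i) plus Yamada--Watanabe) is the same step the paper itself takes at the end of Lemma \ref{z4}.

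One remark to close the step you flagged as the main obstacle, since your sketch of it can be completed in two lines. A priori the representation $\langle\langle P^{2}\rangle\rangle_{k}(a)=\frac{i}{2}g_{k}(I(a))a_{k}$ holds only where $a_{k}\neq0$. But for \emph{any} field $\widetilde P$ one has $\langle\langle\widetilde P\rangle\rangle_{k}(a)=0$ whenever $a_{k}=0$: in (\ref{z7}) the integrand $e^{i\omega_{k}}\widetilde P_{k}(\Phi_{-\omega}a)$ is then independent of $\omega_{k}$, and the $\omega_{k}$-average of $e^{i\omega_{k}}$ vanishes. Comparing $a$ with the point $a'$ obtained by zeroing its $k$-th coordinate, local Lipschitz continuity of $\langle\langle P^{2}\rangle\rangle$ gives $|\langle\langle P^{2}\rangle\rangle_{k}(a)|\leq \mathrm{Lip}_{R}\,|a_{k}|$ on $\overline{B}_{R}$, hence $|g_{k}|\leq2\,\mathrm{Lip}_{R}$ there (note this needs $P^{2}=P-P^{1}\in\mathrm{Lip}_{m_{0}}$, which the hypotheses supply since A1 is assumed for both drifts). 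Moreover, setting $g_{k}:=0$ on the face $\{I_{k}=0\}$ makes the identity $\langle\langle P\rangle\rangle_{k}=\langle\langle P^{1}\rangle\rangle_{k}+\frac{i}{2}g_{k}(I)a_{k}$ valid \emph{everywhere}, so --- contrary to what your last paragraph suggests --- you need no occupation-time or boundary-avoidance argument at all: the It\^o cancellation holds for a.e.\ $\tau$ directly, and the equivariance identities and the L\'evy verification are, as you say, routine.
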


Note that if Corollary \ref{x9} applies, then the assertion of the theorem holds trivially since by Proposition \ref{z8} actions $^{1}I_{k}(\tau)$ and $I_{k}(a(\tau))$ both satisfy equation (\ref{z2}) which has a unique solution. To prove the result in general case, we first for any $\delta>0$ construct a process $\widetilde{a}^{\delta}(\tau)\in\mathbb{C}^{n}$ such that almost surely $I(\widetilde{a}^{\delta}(\tau))=I(a(\tau))$ for all $\tau$ and $\widetilde{a}^{\delta}(\tau)$ satisfies (\ref{w6}) for $\tau$ outside a finite system of segments whose total length becomes small with $\delta$, and then show that as $\delta\rightarrow0$, $\widetilde{a}^{\delta}$ converges in distribution to a weak solution of (\ref{w6}). Since the latter is unique, the assertion will follow.
Now we present a complete proof.

\begin{proof}
The argument below uses some constructions from \cite{HKP,KP}.

 \textbf{Step 1:} \emph{Modifying the equations for large amplitudes.}

For any $R\in\mathbb{N}$, define the stopping time
\[
\tau_{R}=\inf\{\tau\in[0,T]\big||a(\tau)|^{2}\geq R\}.
\]
(If the set on the right-hand side is empty, then we take $\tau_{R}=T$). Then we consider the cut-off equation (\ref{w3})$_{R}$, which is equation (\ref{w3}) for $\tau\leq\tau_{R}$ and is the trivial system
\begin{equation}\label{e6}
{\rm{d}} a_{k}(\tau)={\rm{d}}\beta_{k}(\tau), \quad k=1,\ldots,n
\end{equation}
for $\tau\geq\tau_{R}$. Denote a solution of (\ref{w3})$_{R}$ by $a_{R}=(a_{R1},a_{R2},\ldots,a_{Rn})$. Its action $I_{R}(\tau)=(I_{Rk})_{1\leq k\leq n}:=(\frac{1}{2}|a_{Rk}|^{2})_{1\leq k\leq n}$ solves (\ref{z2})$_{R}$, which is equal to equation (\ref{z2}) for $\tau\leq\tau_{R}$ and to equations for actions of the trivial system above
\begin{equation}\label{x1}
{\rm{d}} I_{k}(\tau)={\rm{d}}\tau+\sqrt{2I_{k}} {\rm{d}}W_{k}, \quad k=1,\ldots,n
\end{equation}
for $\tau\geq\tau_{R}$. Here $\{W_{k}\}$ are independent standard real Wiener precesses. We define the cut-off equation (\ref{w6})$_{R}$ similarly, and denote its solution and the action of the latter by $^{1}a_{R}(\tau)$ and $^{1}I_{R}(\tau)$.

\textbf{Step 2:} \emph{Construction of a process $\widetilde{a}_{R}$.}

Our goal is to construct a new process $\widetilde{a}_{R}(\tau)=(\widetilde{a}_{R1},\ldots,\widetilde{a}_{Rn})$ such that
\begin{enumerate}[{\rm \qquad 1)}]
\item $\widetilde{a}_{R}(\tau)$ solves (\ref{w6})$_{R}$;
\item $\mathcal{D}(\widetilde{I}_{R})=\mathcal{D}(I_{R})$, where $\widetilde{I}_{R}=(\widetilde{I}_{Rk})_{1\leq k\leq n}=(\frac{1}{2}|\widetilde{a}_{Rk}|^{2})_{1\leq k\leq n}$.
\end{enumerate}

Let us fix $\delta>0$. For any curve $I(\tau)\in\mathbb{R}_{+}^{n}$, we denote $[I(\tau)]=\min_{1\leq k\leq n}\{I_{k}(\tau)\}$. We first construct a process $\widetilde{a}_{R}^{\delta}(\tau)=(\widetilde{a}_{R1}^{\delta},\ldots,\widetilde{a}_{Rn}^{\delta})$, such that $\widetilde{I}^{\delta}_{Rk}(\tau):=\frac{1}{2}|\widetilde{a}^{\delta}_{Rk}(\tau)|^{2}\equiv I_{Rk}(\tau)$ for all $k$ a.s. Then we will prove that its limit as $\delta\rightarrow0$ is the process $\widetilde{a}_{R}(\tau)$ that we need.

For definiteness, we assume $[I(v_{0})]>\delta$ and set $\tau_{0}^{+}=0$.
\begin{enumerate}
\item We take for $\widetilde{a}_{R}^{\delta}(\tau)$ a solution of (\ref{w6})$_{R}$ with the initial data $v_{0}$ until $\tau_{1}^{-}$, where $\tau_{1}^{-}$ is the first moment after $\tau_{0}^{+}$ such that $[\widetilde{I}^{\delta}_{R}(\tau)]\leq\delta$ (if this never happens, we take $\tau_{1}^{-}=T$). See Figure \ref{pic1}.
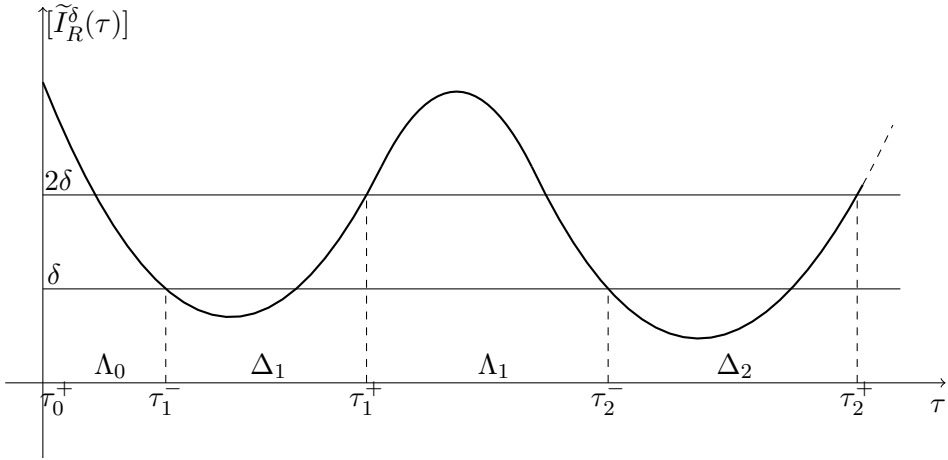
\begin{figure}[h]
{$\!\!\!\!\!\!\!\!\!\!\!\!\!\!\!\!\!$
\begin{tikzpicture}
\draw[, ->, black] (0,1) -- (12.5,1);
\draw[, ->, black] (0.5,0) -- (0.5,6);
\draw[thin] (0.5,2.25) -- (11.9,2.25);
\draw[thin] (0.5,3.5) -- (11.9,3.5);
\draw[thin, dashed] (2.134,1) -- (2.134,2.25);
\draw[thin, dashed] (4.803,1) -- (4.803,3.5);
\draw[thin, dashed] (8.015,1) -- (8.015,2.25);
\draw[thin, dashed] (11.326,1) -- (11.326,3.5);
\node at (12.4,0.7) {\color{black} $\tau$};
\node at (1.1,5.8) {\color{black} $[\widetilde{I}^{\delta}_{R}(\tau)]$
};
\node at (2.134,0.8) {\color{black} $\tau_1^-$};
\node at (4.803,0.8) {\color{black} $\tau_1^+$};
\node at (0.65,2.45) {\color{black} $\delta$};
\node at (0.71,3.7) {\color{black} $2\delta$};
\node at (0.69,0.8) {\color{black} $\tau_0^+$};
\node at (1.4,1.22) {\color{black} $\Lambda_0$};
\node at (3.5,1.22) {\color{black} $\Delta_1$};
\node at (6.5,1.22) {\color{black} $\Lambda_1$};
\node at (9.7,1.22) {\color{black} $\Delta_2$};
\node at (8.015,0.8) {\color{black} $\tau_2^-$};
\node at (11.326,0.8) {\color{black} $\tau_2^+$};
\draw[thick, black, domain=0.5:5] plot (\x, {0.5*(\x-0.5)*(\x-0.5)-2.5*(\x-0.5) +5});
\draw[thick,  black, domain=5:7] plot (\x, {-(\x-5)*(\x-5)+2*(\x-5) +3.875});
\draw[thick,  black, domain=7:11.4] plot (\x, {-0.008*(\x-7)*(\x-7)*(\x-7)+0.5*(\x-10)*(\x-10)+ 0.9*(\x-7) -0.625});
\draw[dashed,  black, domain=11.4:11.8] plot (\x, {-0.008*(\x-7)*(\x-7)*(\x-7)+0.5*(\x-10)*(\x-10)+ 0.9*(\x-7) -0.625});
\end{tikzpicture}
}
  \caption{A typical behaviour of the stopping times  $\tau_j^\pm$}\label{pic1}
\end{figure}

\item We know $\widetilde{a}_{R}^{\delta}(\tau)$ at point $\tau_{1}^{-}$. Below in Lemma \ref{w9} we show that $\widetilde{I}^{\delta}_{R}(\tau)=I_{R}(\tau)$ for all $\tau\in[\tau_{0}^{+},\tau_{1}^{-}]$, so $\widetilde{I}^{\delta}_{R}(\tau_{1}^{-})=I_{R}(\tau_{1}^{-})$. Thus, $\widetilde{a}^{\delta}_{R}(\tau_{1}^{-})=\Phi_{\theta(\tau_{1}^{-})}a_{R}(\tau_{1}^{-})$ for a suitable angle-vector $\theta(\tau_{1}^{-})\in \mathbb{T}^{n}$. Then we set $\widetilde{a}^{\delta}_{R}(\tau)=\Phi_{\theta(\tau_{1}^{-})}a_{R}(\tau)$ for $\tau\in[\tau_{1}^{-},\tau_{1}^{+}]$, where $\tau_{1}^{+}$ is the first moment after $\tau_{1}^{-}$ such that $[\widetilde{I}^{\delta}_{R}(\tau)]\geq2\delta$ (if this never happens, we take $\tau_{1}^{+}=T$).

\item Now we come to $\tau_{1}^{+}$. We take for $\widetilde{a}_{R}^{\delta}(\tau)$ a solution of (\ref{w6})$_{R}$ with the initial data $\widetilde{a}_{R}^{\delta}(\tau_{1}^{+})$ until $\tau_{2}^{-}$, where $\tau_{2}^{-}$ is the first moment after $\tau_{1}^{+}$ such that $[\widetilde{I}^{\delta}_{R}(\tau)]\leq\delta$ (if this never happens, we take $\tau_{2}^{-}=T$), etc.
\end{enumerate}

We repeat the above steps until $\tau=T$. It is easy to see from the equation for $\widetilde{a}_{R}^{\delta}(\tau)$ that a.s., either $\tau_{j}^{-}=T$ or $\tau_{j}^{+}=T$ for some finite $j$. Thus we obtain a process $\widetilde{a}_{R}^{\delta}(\tau)$, $0\leq \tau\leq T$, such that $\widetilde{I}_{R}^{\delta}(\tau)=I_{R}(\tau)$ a.s. In the following, we denote $\Delta_{j}=[\tau_{j}^{-},\tau_{j}^{+}]$ and $\Lambda_{j}=[\tau_{j}^{+},\tau_{j+1}^{-}]$. Then $[\widetilde{I}^{\delta}_{R}(\tau)]\geq\delta$ on $\Lambda_{j}$, $[\widetilde{I}^{\delta}_{R}(\tau)]\leq2\delta$ on $\Delta_{j}$ for each $j\geq0$ and
\begin{equation}\label{x2}
\Lambda_{0}\leq \Delta_{1}\leq\Lambda_{1}\leq \Delta_{2}\leq\ldots.
\end{equation}
The process $a_{R}^{\delta}(\tau)$ is defined on the union of the intervals (\ref{x2}) which equals $[0,T]$, a.s.

\begin{lemma}\label{w9}
For $\tau\in\Lambda_{0}\cup\Delta_{1}$, we have $\widetilde{I}^{\delta}_{R}(\tau)=I_{R}(\tau)$ a.s.
\end{lemma}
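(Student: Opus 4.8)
The plan is to treat the two sub-intervals separately. On $\Delta_{1}=[\tau_{1}^{-},\tau_{1}^{+}]$ the process $\widetilde{a}^{\delta}_{R}$ is by construction the \emph{fixed} rotation $\Phi_{\theta(\tau_{1}^{-})}a_{R}$, and since $\Phi_{\theta}$ is diagonal unitary it preserves each $|a_{Rk}|$; hence $\widetilde{I}^{\delta}_{Rk}=\tfrac12|\Phi_{\theta}a_{R}|^{2}_{k}=\tfrac12|a_{Rk}|^{2}=I_{Rk}$ there, and nothing more is needed. All the content is on $\Lambda_{0}=[0,\tau_{1}^{-}]$, where by definition of $\tau_{1}^{-}$ every component of the action stays $\geq\delta$, so we remain in the interior of $\mathbb{R}_{+}^{n}$ where $\langle h\rangle$ is a smooth function of $I$.

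On $\Lambda_{0}$ I would realise the solution of the modified cut-off equation (\ref{w6})$_{R}$ as a time-dependent rotation of $a_{R}$ designed to absorb exactly the Hamiltonian drift. The point is that, as computed in Section~3, $\langle\langle P^{2}\rangle\rangle_{k}(a)=\tfrac{i}{2}\frac{\partial\langle h\rangle}{\partial I_{k}}\,a_{k}$ is a pure, action-dependent infinitesimal rotation. Accordingly I set $\gamma_{k}(\tau)=-\tfrac12\int_{0}^{\tau}\frac{\partial\langle h\rangle}{\partial I_{k}}(I_{R}(s))\,{\rm d}s$ (well defined and of bounded variation on $\Lambda_{0}$ precisely because $[I_{R}]\geq\delta$ there) and put $\widetilde{a}:=\Phi_{\gamma}a_{R}$. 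Because $\Phi_{\gamma}$ is diagonal unitary, $I(\widetilde{a})=I(a_{R})=I_{R}$ automatically; it then remains to check that $\widetilde{a}$ solves (\ref{w6})$_{R}$, so that by the strong uniqueness from Theorem~\ref{w4}(i) it coincides with $\widetilde{a}^{\delta}_{R}$, giving $\widetilde{I}^{\delta}_{R}=I_{R}$.

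The verification rests on three equivariance/identification facts that I would record as a short preliminary computation. First, the change of variable $\omega\mapsto\omega-\gamma$ in (\ref{z7}) and (\ref{z9}) yields the rotation-equivariances $\langle\langle P^{1}\rangle\rangle(\Phi_{\gamma}a)=\Phi_{\gamma}\langle\langle P^{1}\rangle\rangle(a)$ and $A(\Phi_{\gamma}a)=\Phi_{\gamma}A(a)\Phi_{\gamma}^{*}$; since $\Phi_{\gamma}$ is unitary and the principal square root commutes with unitary conjugation, also $B(\Phi_{\gamma}a)=\Phi_{\gamma}B(a)\Phi_{\gamma}^{*}$. Second, applying It\^{o} to $\widetilde{a}_{k}=e^{i\gamma_{k}}a_{Rk}$ (with $\gamma_{k}$ of bounded variation, so no correction terms), the drift $i\dot\gamma_{k}a_{k}$ exactly cancels the rotation $\tfrac{i}{2}\frac{\partial\langle h\rangle}{\partial I_{k}}a_{k}$ coming from $\langle\langle P^{2}\rangle\rangle_{k}$, while the two equivariances turn the remaining $a_{R}$-coefficients into the corresponding $\widetilde{a}$-coefficients. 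Third, the rotated noise $\widetilde{\beta}_{l}:=\int_{0}^{\cdot}e^{i\gamma_{l}}\,{\rm d}\beta_{l}$ is again a standard complex Wiener process by L\'{e}vy's characterisation, since $\gamma$ is adapted and $|e^{i\gamma_{l}}|=1$ (so that ${\rm d}\widetilde{\beta}_{l}\,{\rm d}\overline{\widetilde{\beta}_{l}}=2\,{\rm d}\tau$, ${\rm d}\widetilde{\beta}_{l}\,{\rm d}\widetilde{\beta}_{l}=0$, and distinct components stay independent). Collecting these, $\widetilde{a}$ satisfies ${\rm d}\widetilde{a}_{k}=\langle\langle P^{1}\rangle\rangle_{k}(\widetilde{a})\,{\rm d}\tau+\sum_{l}B_{kl}(\widetilde{a})\,{\rm d}\widetilde{\beta}_{l}$, i.e.\ (\ref{w6})$_{R}$.

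I expect the only genuinely delicate point to be the behaviour of the rotation near the boundary of $\mathbb{R}_{+}^{n}$: the angular velocity $\frac{\partial\langle h\rangle}{\partial I_{k}}$ need not stay bounded as some $I_{k}\to0$, so the clean rotation ansatz is available only while all actions are bounded away from $0$. This is exactly the role played by the threshold $\delta$ and by the splitting into the intervals $\Lambda_{j}$ (where the dynamics genuinely follows (\ref{w6})$_{R}$) and $\Delta_{j}$ (where one must fall back on a frozen rotation); the present lemma handles the first such pair $\Lambda_{0},\Delta_{1}$, and the same argument repeats verbatim on each later $\Lambda_{j}$, with the trivial frozen-rotation argument on each $\Delta_{j}$. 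Establishing the equivariance of $B=\sqrt{A}$ and the Wiener property of $\widetilde{\beta}$ are the remaining technical checks, but both are routine.
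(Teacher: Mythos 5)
Your preliminary computations are correct and constitute a genuinely different route from the paper's. The change of variables $\omega\mapsto\omega+\gamma$ in (\ref{z7}), (\ref{z9}) does give the equivariances $\langle\langle P^{1}\rangle\rangle(\Phi_{\gamma}a)=\Phi_{\gamma}\langle\langle P^{1}\rangle\rangle(a)$ and $A(\Phi_{\gamma}a)=\Phi_{\gamma}A(a)\Phi_{\gamma}^{*}$, hence $B(\Phi_{\gamma}a)=\Phi_{\gamma}B(a)\Phi_{\gamma}^{*}$ by uniqueness of the principal square root; the bounded-variation rotation with $\dot\gamma_{k}=-\frac{1}{2}\partial_{I_{k}}\langle h\rangle(I_{R})$ exactly cancels $\langle\langle P^{2}\rangle\rangle_{k}(a)=\frac{i}{2}\partial_{I_{k}}\langle h\rangle\,a_{k}$; and $\widetilde\beta_{l}=\int_{0}^{\cdot}{\rm{e}}^{i\gamma_{l}}{\rm{d}}\beta_{l}$ is a standard complex Wiener process by L\'evy's characterisation. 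The paper argues quite differently: it never rotates $a_{R}$ on $\Lambda_{0}$, but instead observes that both action curves $\widetilde{I}^{\delta}_{R}$ and $I_{R}$ satisfy the cut-off averaged action equation (\ref{z2})$_{R}$ (the hamiltonian part drops out of the action dynamics, as computed in Section 3), whose coefficients are locally Lipschitz in $Q=\{I:I_{j}\geq\delta\ \forall j\}$, so uniqueness in $Q$ forces the curves to coincide up to the exit time; a stopping-time bootstrap with $\widetilde{\tau}=\tau_{1}^{-}\wedge\min\{\tau:[I_{R}(\tau)]=\delta\}$, concluding $\widetilde{\tau}=\tau_{1}^{-}$, then settles $\Lambda_{0}$, and $\Delta_{1}$ is trivial as in your text.

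The gap is in your final identification step. Your process $\widetilde{a}=\Phi_{\gamma}a_{R}$ solves (\ref{w6})$_{R}$ with respect to the \emph{rotated} noise $\widetilde\beta$, whereas $\widetilde{a}^{\delta}_{R}$, as constructed in Step 2 of the paper, is driven on $\Lambda_{0}$ by the original $\beta$ (see (\ref{e5})). Strong (pathwise) uniqueness compares two solutions with the \emph{same} driving Wiener process; for solutions driven by different noises, well-posedness yields only uniqueness in law. So your argument delivers $\mathcal{D}(\widetilde{I}^{\delta}_{R})=\mathcal{D}(I_{R})$ on $\Lambda_{0}$, not the almost-sure identity $\widetilde{I}^{\delta}_{R}(\tau)=I_{R}(\tau)$ that the lemma asserts -- and the pathwise identity is precisely what the construction uses afterwards: the gluing $\widetilde{a}^{\delta}_{R}(\tau_{1}^{-})=\Phi_{\theta(\tau_{1}^{-})}a_{R}(\tau_{1}^{-})$ in step (ii) and the estimate (\ref{e1}) via Lemma \ref{w7} both rest on $\widetilde{I}^{\delta}_{R}\equiv I_{R}$ a.s. There is also a mild circularity you should repair: you justify that $\gamma$ is well defined on $\Lambda_{0}$ ``because $[I_{R}]\geq\delta$ there'', but $\tau_{1}^{-}$ is defined through $\widetilde{I}^{\delta}_{R}$, so a priori only $[\widetilde{I}^{\delta}_{R}]\geq\delta$ is known on $\Lambda_{0}$; this is exactly what the paper's auxiliary stopping time $\widetilde{\tau}$ handles. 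Both defects vanish if you promote your rotation from a comparison device to the \emph{definition}: since the martingale-problem argument of Step 3 only needs $\widetilde{a}^{\delta}_{R}$ to be a weak solution of (\ref{w6})$_{R}$ on the $\Lambda_{j}$'s, you may define $\widetilde{a}^{\delta}_{R}:=\Phi_{\gamma}a_{R}$ up to $\min\{\tau:[I_{R}(\tau)]=\delta\}$ (freezing $\gamma$ at $\tau_{R}$, after which the dynamics is the trivial system (\ref{e6})), making the lemma true by construction with $\tau_{1}^{-}$ read off $I_{R}$ directly. As a proof of the lemma for the process as the paper defines it, however, your argument does not close.
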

\begin{proof}
For $0\leq\tau\leq\tau_{1}^{-}$ let $\widetilde{\tau}$ be the stopping time $\widetilde{\tau}=\tau_{1}^{-}\wedge\min\{\tau: [I_{R}(\tau)]=\delta\}$. Then $\widetilde{\tau}>0$. For $0\leq\tau\leq\widetilde{\tau}$ the curves $\widetilde{I}_{R}^{\delta}(\tau)$ and $I_{R}(\tau)$ stay in domain $Q=\{I: I_{j}\geq\delta \ \forall j\}$ and satisfy there equation (\ref{z2})$_{R}$. Since in $Q$ that equation has locally Lipschitz coefficients, then $\widetilde{I}_{R}^{\delta}(\tau)=I_{R}(\tau)$ for $\tau\leq\widetilde{\tau}$. Thus $\widetilde{\tau}=\tau_{1}^{-}$, and for $\tau\in\Lambda_{0}$ the assertion follows.

For $\tau\in\Delta_{1}$, $\widetilde{I}^{\delta}_{R}(\tau)=I_{R}(\tau)$ since $\widetilde{a}^{\delta}_{R}(\tau)=\Phi_{\theta(\tau_{1}^{-})}a_{R}(\tau)$.
\end{proof}

Using the construction (i)-(iii) and applying Lemma \ref{w9} to intervals $\Lambda_{j}$, $\Delta_{j}$ with $j\geq0$, we obtained a process $\widetilde{a}^{\delta}_{R}(\tau)$, $\tau\in[0,T]$, such that:

(1). $\widetilde{a}^{\delta}_{R}(\tau)$ solves (\ref{w6})$_{R}$ on $\bigcup_{j\geq0}\Lambda_{j}$;

(2). $\widetilde{I}^{\delta}_{R}(\tau)=I_{R}(\tau)$ on $[0,T]$ a.s.\\
Additionally, we have
\begin{lemma}\label{w7}
For every $k$, $\mathbf{E}\int_{0}^{\tau_{R}}1_{\{{I}_{Rk}(\tau)\leq\delta\}}(\tau){\rm{d}}\tau\rightarrow0$ as $\delta\rightarrow0$.
\end{lemma}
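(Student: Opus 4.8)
The plan is to reduce the claim to the statement that the process $I_{Rk}$ spends no time at the level $0$, and then to prove the latter by a test-function argument that exploits the uniform ellipticity (\ref{x7}). Write $\mu_\delta:=\mathbf{E}\int_0^{\tau_R}1_{\{I_{Rk}(\tau)\le\delta\}}\,{\rm d}\tau$. Since the integrand decreases to $1_{\{I_{Rk}(\tau)=0\}}$ as $\delta\downarrow0$ and is dominated by $1$ on the finite interval $[0,\tau_R]\subset[0,T]$, dominated convergence gives $\mu_\delta\to\mathbf{E}\int_0^{\tau_R}1_{\{I_{Rk}(\tau)=0\}}\,{\rm d}\tau$. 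By the same token $\mathbf{E}\int_0^{\tau_R}1_{\{0<I_{Rk}(\tau)\le 2\delta\}}\,{\rm d}\tau\to0$, because the set $(0,2\delta]$ shrinks to $\emptyset$. Hence it suffices to show $\mathbf{E}\int_0^{\tau_R}1_{\{I_{Rk}(\tau)=0\}}\,{\rm d}\tau=0$, and this is where the whole work lies.

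On $[0,\tau_R]$ the process $a_R$ solves (\ref{w3}), so by Proposition \ref{z8} its action $I_R$ solves the averaged equation (\ref{z2}); in particular $I_{Rk}$ is a one–dimensional It\^o process with drift $F_k(I_R)$ and quadratic variation ${\rm d}\langle I_{Rk}\rangle=S_{kk}(I_R)\,{\rm d}\tau$. From (\ref{e3}) the diagonal entry is $S_{kk}(I)=2I_k\langle\rho_k\rangle$, where $\rho_k(v):=\sum_l|\Psi_{kl}(v)|^2=(\Psi\Psi^*)_{kk}(v)$, while $F_k(I)=\langle v_k\cdot P_k\rangle+\langle\rho_k\rangle$. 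Taking $\xi=e_k$ in (\ref{x7}) gives $\lambda\le\rho_k(v)\le\lambda^{-1}$ for all $v$, hence $2\lambda I_k\le S_{kk}(I)\le2\lambda^{-1}I_k$ and $\langle\rho_k\rangle\ge\lambda$. On $[0,\tau_R]$ one has $|a_R|\le\sqrt R$, so $|P_k|\le C_R$ there and $|\langle v_k\cdot P_k\rangle|\le\sqrt{2I_k}\,C_R$. Consequently there is $\delta_0=\delta_0(R,\lambda)>0$ with $F_k(I)\ge\lambda/2$ whenever $I_k\le\delta_0$: near the boundary the drift of the $k$-th action points strictly inward, which is the analytic manifestation of ellipticity.

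Next I would fix $\delta\in(0,\delta_0/2)$ and build a $C^2$ test function $g=g_\delta:[0,\infty)\to\mathbb{R}$ with $g(x)=-x$ on $[0,\delta]$, a convex interpolation on $[\delta,2\delta]$, and $g\equiv{\rm const}$ on $[2\delta,\infty)$, arranged so that $|g|\le2\delta$, $|g'|\le1$ and $|g''|\le C\delta^{-1}$. Applying It\^o's formula to $g(I_{Rk}(\tau\wedge\tau_R))$ and taking expectations (the martingale term vanishes because $g'$ is bounded and $S_{kk}$ is bounded on the stopped region) yields
\[
\mathbf{E}\,g(I_{Rk}(\tau_R))-g(I_{Rk}(0))=\mathbf{E}\int_0^{\tau_R}\Big(g'(I_{Rk})F_k(I_R)+\tfrac12 g''(I_{Rk})S_{kk}(I_R)\Big)\,{\rm d}\tau .
\]
On $\{I_{Rk}\le\delta\}$ the integrand equals $-F_k\le-\lambda/2$; on $\{I_{Rk}\ge2\delta\}$ it vanishes; and on the transition layer $\{\delta<I_{Rk}<2\delta\}$ it is bounded in absolute value by $|g'|\,|F_k|+\tfrac12|g''|S_{kk}\le C_R+\tfrac12(C\delta^{-1})(4\lambda^{-1}\delta)=:C'$, a constant independent of $\delta$ (here $S_{kk}\le4\lambda^{-1}\delta$ on the layer). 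Since the left-hand side is $\le2\sup|g|\le4\delta$, rearranging gives
\[
\tfrac{\lambda}{2}\,\mu_\delta\le4\delta+C'\,\mathbf{E}\int_0^{\tau_R}1_{\{\delta<I_{Rk}(\tau)<2\delta\}}\,{\rm d}\tau .
\]

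Finally I would let $\delta\to0$. The first term tends to $0$, and the second is dominated by $C'\,\mathbf{E}\int_0^{\tau_R}1_{\{0<I_{Rk}\le2\delta\}}\,{\rm d}\tau\to0$ by the convergence noted in the first paragraph. Hence $\mu_\delta\to0$, which is the assertion. The one delicate point—and the heart of the argument—is the transition layer $[\delta,2\delta]$: matching $g$ to its constant part forces $g''>0$ there, an unfavourable contribution to the generator, and the key is that it stays bounded uniformly in $\delta$ precisely because $S_{kk}=O(\delta)$ on that layer, so that after multiplication by the occupation time of a set which itself shrinks to $\emptyset$ it disappears in the limit. The inward-drift bound $F_k\ge\lambda/2$, which is exactly what uniform ellipticity buys, is what makes the favourable term on $\{I_{Rk}\le\delta\}$ nondegenerate.
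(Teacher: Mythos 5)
Your argument is correct, but it takes a genuinely different route from the paper's. The paper disposes of the lemma in two lines: it regards $a_R$ as an It\^o process in $\mathbb{C}^n\cong\mathbb{R}^{2n}$ with bounded drift and uniformly nondegenerate diffusion (from (\ref{x7}) one gets $CE\le B(a)B^{*}(a)\le C^{-1}E$, and the post-$\tau_R$ dynamics (\ref{e6}) is trivially elliptic), and then invokes Krylov's occupation-time estimate (Theorem 2.2.4 in \cite{NK}) for the set $\{|a_k|^2\le 2\delta\}$, whose Lebesgue measure on the ball $\{|a|^2\le R\}$ is $O(\delta)$; this immediately yields the quantitative bound $\mathbf{E}\int_0^{\tau_R}1_{\{I_{Rk}\le\delta\}}{\rm d}\tau\le C(R,n)\,\delta^{1/n}$. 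You instead work with the scalar action $I_{Rk}$ (legitimately, via Proposition \ref{z8}, or equivalently by applying It\^o's formula to $g\bigl(\tfrac12|a_{Rk}|^2\bigr)$ under (\ref{w3})$_R$ directly), use only the diagonal consequence $\lambda\le(\Psi\Psi^{*})_{kk}\le\lambda^{-1}$ of ellipticity, and exploit the resulting boundary structure: inward drift $F_k\ge\lambda/2$ for $I_k\le\delta_0(R,\lambda)$ together with linearly degenerating diffusion $S_{kk}=2I_k\langle\rho_k\rangle\le4\lambda^{-1}\delta$ on the transition layer, so that a $\delta$-scaled barrier converts the drift into the occupation bound while the unfavourable $g''$-term stays $O(1)$ and is multiplied by the occupation time of $\{\delta<I_{Rk}<2\delta\}$, which vanishes by monotone convergence (no circularity, since the zero level is excluded there). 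All steps check: the identification of the drift and quadratic variation of $I_{Rk}$ is exactly the computation behind (\ref{x8}) and Proposition \ref{z8}, the stochastic integral has zero mean since $g'$ and $S_{kk}$ are bounded up to $\tau_R$, and your opening reduction to the level $\{I_{Rk}=0\}$ is in fact superfluous, since the final inequality $\tfrac{\lambda}{2}\mu_\delta\le4\delta+C'\nu_\delta$ already gives $\mu_\delta\to0$ directly. The trade-off: the paper's proof is shorter and quantitative (explicit rate $\delta^{1/n}$), but leans on a nontrivial external theorem and on full matrix nondegeneracy of $B$; yours is self-contained, for this lemma needs only a uniform lower bound on the diagonal entries of $\Psi\Psi^{*}$ (formally weaker than (\ref{x7})), and exhibits the mechanism---near $I_k=0$ the action behaves like a CIR/squared-Bessel-type process with strictly positive drift, whose boundary is non-sticky---at the price of producing no rate in $\delta$.
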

\begin{proof}
The solution $a_{R}(\tau)$ satisfies
\begin{equation*}
{\rm{d}} a_{Rk}(\tau)=1_{\tau\leq\tau_{R}}\left(\langle\langle P\rangle\rangle_{k}(a_{R}){\rm{d}} \tau+\sum\limits_{l}B_{kl}(a_{R}){\rm{d}}\beta_{l}(\tau)\right)+1_{\tau\geq\tau_{R}}{\rm{d}}\beta_{k}(\tau), \quad k=1,\ldots,n.
\end{equation*}
Since $\left|\langle\langle P\rangle\rangle(a_{R})\right|\leq C^{m_{0}}(P)(1+R)^{m_{0}}$ and $C E\leq B(a_{R})B^{*}(a_{R})\leq C^{-1}E$ as the matrix $\Psi$ satisfy (\ref{x7}), Theorem 2.2.4 in \cite{NK} implies that $\mathbf{E}\int_{0}^{\tau_{R}}1_{\{{I}_{Rk}(\tau)\leq\delta\}}(\tau){\rm{d}}\tau\leq C\delta^{\frac{1}{n}}$, where $C=C(R,n)$. Thus, the lemma follows.
\end{proof}

\textbf{Step 3:} \emph{The limit as $\delta\rightarrow0$.}

\begin{lemma}\label{x3}
For any fixed sequence $\delta_{j}\rightarrow0$ the family of measures $\{\mathcal{D}(\widetilde{a}^{\delta_{j}}_{R}(\tau)), j\geq1\}$ is tight in $\mathcal{P}(C([0,T];\mathbb{C}^{n}))$.
\end{lemma}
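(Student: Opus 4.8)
The plan is to verify the Kolmogorov--Chentsov tightness criterion for continuous processes: a family $\{X^{j}\}$ of continuous $\mathbb{C}^{n}$-valued processes on $[0,T]$ is tight in $\mathcal{P}(C([0,T];\mathbb{C}^{n}))$ provided the laws of $X^{j}(0)$ are tight and there are constants $\gamma>0$, $\epsilon>0$ and $C$ \emph{independent of} $j$ with $\mathbf{E}|X^{j}(t)-X^{j}(s)|^{\gamma}\leq C|t-s|^{1+\epsilon}$ for all $s,t\in[0,T]$. Here each $\widetilde{a}^{\delta_{j}}_{R}$ starts at the deterministic point $v_{0}$, so the first requirement is immediate, and the whole task reduces to a moment bound on the increments that is uniform in the sequence index $j$ (equivalently, uniform in $\delta$).

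First I would record that, by construction, $\widetilde{a}^{\delta}_{R}$ is a continuous adapted process which on each interval $\Lambda_{j}$ solves the cut-off equation (\ref{w6})$_{R}$, while on each interval $\Delta_{j}$ it equals $\Phi_{\theta(\tau_{j}^{-})}a_{R}$, a fixed rotation of the solution $a_{R}$ of (\ref{w3})$_{R}$. Separating off the drift exhibits $\widetilde{a}^{\delta}_{R}$ as a continuous semimartingale $\widetilde{a}^{\delta}_{R}(t)=v_{0}+\int_{0}^{t}b^{\delta}(\tau)\,{\rm d}\tau+M^{\delta}(t)$ with continuous local martingale part $M^{\delta}$. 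The crucial point is that the coefficients are bounded uniformly in $\delta$: since $\widetilde{I}^{\delta}_{R}\equiv I_{R}$ forces $|\widetilde{a}^{\delta}_{R}|=|a_{R}|\leq\sqrt{R}$ on the nontrivial part (before $\tau_{R}$), Assumption A1 bounds $|\langle\langle P^{1}\rangle\rangle(\widetilde{a}^{\delta}_{R})|$ and $|\langle\langle P\rangle\rangle(a_{R})|$ by a constant $C(R)$, while the uniform ellipticity (\ref{x7}) of $\Psi$ bounds $\|B\|$ by a constant exactly as in the proof of Lemma \ref{w7} (where $B(a)^{2}=A(a)$ is squeezed between multiples of the identity). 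Because $\Phi_{\theta}$ is unitary, rotating the coefficients of (\ref{w3})$_{R}$ on the intervals $\Delta_{j}$ leaves these bounds unchanged; on the trivial parts beyond $\tau_{R}$ the coefficients are trivially bounded. Hence $|b^{\delta}(\tau)|\leq C(R)$ and $\tfrac{{\rm d}}{{\rm d}\tau}\langle M^{\delta}\rangle(\tau)\leq C(R)$ for all $\tau$, a.s., with $C(R)$ independent of $\delta$.

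I would then estimate the increments by the Burkholder--Davis--Gundy inequality together with H\"older's inequality. Applied componentwise, BDG gives $\mathbf{E}|M^{\delta}(t)-M^{\delta}(s)|^{4}\leq C\,\mathbf{E}\big(\langle M^{\delta}\rangle(t)-\langle M^{\delta}\rangle(s)\big)^{2}\leq C(R)|t-s|^{2}$, while the finite-variation part satisfies $\big|\int_{s}^{t}b^{\delta}\,{\rm d}\tau\big|^{4}\leq C(R)|t-s|^{4}$. Combining, for $s\leq t$ in $[0,T]$,
\[
\mathbf{E}|\widetilde{a}^{\delta}_{R}(t)-\widetilde{a}^{\delta}_{R}(s)|^{4}\leq C\,\mathbf{E}\Big|\int_{s}^{t} b^{\delta}\,{\rm d}\tau\Big|^{4}+C\,\mathbf{E}|M^{\delta}(t)-M^{\delta}(s)|^{4}\leq C(R)|t-s|^{2},
\]
where the $|t-s|^{4}$ term is absorbed using $|t-s|\leq T$. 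This is the required bound with $\gamma=4$ and $\epsilon=1$, uniform in $\delta$, so the tightness criterion yields tightness of $\{\mathcal{D}(\widetilde{a}^{\delta_{j}}_{R})\}_{j\geq1}$ in $\mathcal{P}(C([0,T];\mathbb{C}^{n}))$.

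The main obstacle I anticipate is the bookkeeping needed to justify the semimartingale representation across the random switching times $\tau_{j}^{\pm}$: one must check that the stochastic integrals produced on the successive intervals $\Lambda_{j}$ and $\Delta_{j}$ concatenate into a single continuous local martingale $M^{\delta}$ with respect to one filtration, the pieces on $\Delta_{j}$ being driven by the rotated Wiener process $\Phi_{\theta(\tau_{j}^{-})}\beta$. Once the coefficients are seen to be bounded by $C(R)$ independently of the (random, $\delta$-dependent) partition into $\Lambda_{j}$ and $\Delta_{j}$, the moment estimate above is insensitive to the number of pieces, so this is the only delicate point; the increment estimate itself is then routine.
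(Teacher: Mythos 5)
Your proposal is correct and follows essentially the same route as the paper: both establish the uniform-in-$\delta$ fourth-moment increment bound $\mathbf{E}\big|\widetilde{a}^{\delta}_{R}(\tau_{2})-\widetilde{a}^{\delta}_{R}(\tau_{1})\big|^{4}\leq C(R)\left(|\tau_{2}-\tau_{1}|^{4}+|\tau_{2}-\tau_{1}|^{2}\right)$ by splitting off the drift (bounded via $|\widetilde{a}^{\delta}_{R}|=|a_{R}|\leq\sqrt{R}$ before $\tau_{R}$, Assumption A1, and unitarity of $\Phi_{\theta}$) from the stochastic integral (handled by a BDG-type bound), and then conclude via the Kolmogorov continuity criterion and Prokhorov's theorem. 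The only cosmetic difference is that you invoke the ellipticity bound (\ref{x7}) to control $B$, whereas the paper's proof needs only Assumption A1 on the ball of radius $\sqrt{R}$, and the concatenation issue you flag is handled in the paper by writing the single SDE (\ref{e5}) driven by the one Wiener process $\beta$ with predictable indicator coefficients.
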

\begin{proof}
The equation for $\widetilde{a}^{\delta}_{R}(\tau)$ is
\begin{equation}\label{e5}
\begin{split}
&{\rm{d}} \widetilde{a}^{\delta}_{Rk}(\tau)=\sum\limits_{j}1_{\tau\leq\tau_{R}}1_{\tau\in\Lambda_{j}}\left(\langle\langle P^{1}\rangle\rangle_{k}(\widetilde{a}^{\delta}_{R}){\rm{d}}\tau+\sum\limits_{l}B_{kl}(\widetilde{a}^{\delta}_{R}){\rm{d}}\beta_{l}(\tau)\right)\\
&+\sum\limits_{j}1_{\tau\leq\tau_{R}}1_{\tau\in\Delta_{j}}\Phi_{\theta(\tau_{j}^{-})}\left(\langle\langle P\rangle\rangle_{k}(a_{R}){\rm{d}}\tau+\sum\limits_{l}B_{kl}(a_{R}){\rm{d}}\beta_{l}(\tau)\right)
+1_{\tau\geq\tau_{R}}{\rm{d}}\beta_{k}(\tau), \ \ k=1,\ldots, n,
\end{split}
\end{equation}
where $\widetilde{a}^{\delta}_{R}(\tau)=\Phi_{\theta(\tau_{j}^{-})}a_{R}(\tau)$ for $\tau\in\Delta_{j}$. Since $I(\widetilde{a}_{R}^{\delta}(\tau))=I(a_{R}(\tau))$, where for $\tau\leq\tau_{R}$ the norm of $a_{R}(\tau)$ is bounded by $\sqrt{R}$ and for $\tau\geq\tau_{R}$, $a_{R}(\tau)$ satisfies the trivial equation (\ref{e6}), then $\mathbf{E}\sup_{0\leq\tau\leq T}|\widetilde{a}^{\delta}_{R}(\tau)|\leq C(R)$. Next in view of (\ref{e5}) and Assumption A1 we have for any $0\leq\tau_{1}\leq\tau_{2}\leq T$ that
\begin{align*}
&\mathbf{E}\big|\widetilde{a}^{\delta}_{R}(\tau_{2})-\widetilde{a}^{\delta}_{R}(\tau_{1})\big|^{4}
\leq C\mathbf{E}\bigg|\sum\limits_{j}\int_{\tau_{1}}^{\tau_{2}}1_{\tau\leq\tau_{R}}\left(1_{\tau\in\Lambda_{j}}\langle\langle
P^{1}\rangle\rangle+1_{\tau\in\Delta_{j}}\Phi_{\theta(\tau_{j}^{-})}\langle\langle P\rangle\rangle\right){\rm{d}}\tau\bigg|^{4}\\
&\quad +C\mathbf{E}\bigg|\sum\limits_{j}\int_{\tau_{1}}^{\tau_{2}}
1_{\tau\leq\tau_{R}}\left(1_{\tau\in\Lambda_{j}}B+1_{\tau\in\Delta_{j}}\Phi_{\theta(\tau_{j}^{-})}B\right){\rm{d}}\beta(\tau)\bigg|^{4}
+C\mathbf{E}\bigg|\int_{\tau_{1}}^{\tau_{2}}1_{\tau\geq\tau_{R}}{\rm{d}}\beta(\tau)\bigg|^{4}\\
&\leq C(R)\left(|\tau_{2}-\tau_{1}|^{4}+|\tau_{2}-\tau_{1}|^{2}\right).
\end{align*}
Then by the Kolmogorov theorem on the H\"{o}lder continuity of a random process\footnote{Strictly speaking this fact is a consequence not of the Kolmogorov theorem, but of its proof. See Theorem 7 in Section 1.4 of \cite{NK02}.} and Prokhorov's theorem the assertion follows.
\end{proof}
So there exists a sequence $\delta_{l}\rightarrow0$ and a measure $Q_{R}^{0}$ in $C([0,T];\mathbb{C}^{n})$ such that $Q_{R}^{\delta_{l}}:=\mathcal{D}(\widetilde{a}^{\delta_{l}}_{R}(\tau))\rightharpoonup Q_{R}^{0}$ as $\delta_{l}\rightarrow0$. Then

\begin{lemma}\label{z4}
We have $Q_{R}^{0}=\mathcal{D}(\widetilde{a}_{R}(\tau))$, where $\widetilde{a}_{R}(\tau)$ is a unique weak solution of the cut-off modified effective equation {\rm{(\ref{w6})$_{R}$}} with initial data $v_{0}$, and $\mathcal{D}(I(\widetilde{a}_{R}))=\mathcal{D}(I(a_{R}))$. Here $a_{R}(\tau)$ is a solution of the cut-off effective equation {\rm{(\ref{w3})$_{R}$}} with initial data $v_{0}$.
\end{lemma}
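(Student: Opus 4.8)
The plan is to identify the limit measure $Q_R^0$ as the law of a weak solution of the cut-off modified effective equation (\ref{w6})$_R$ via its martingale problem, and then to close the argument with well-posedness of (\ref{w6})$_R$. First I would write $\mathcal{L}^R$ for the (path-dependent) generator of (\ref{w6})$_R$, namely the operator built from the drift $\langle\langle P^1\rangle\rangle$ and the diffusion matrix $B$ on $\{\tau\le\tau_R\}$ and from the trivial coefficients of (\ref{e6}) on $\{\tau>\tau_R\}$. Fixing $\phi\in C_b^2(\mathbb{C}^n)$ and applying It\^o's formula to $\phi(\widetilde a_R^\delta(\tau))$ along equation (\ref{e5}), the finite-variation part of $d\phi$ equals $(\mathcal{L}^R\phi)(\widetilde a_R^\delta)\,d\tau$ on $\{\tau>\tau_R\}$ and on every interval $\Lambda_j$, and differs from it only on the random set $\bigcup_j\Delta_j\cap\{\tau\le\tau_R\}$, where the rotated coefficients $\Phi_{\theta(\tau_j^-)}\langle\langle P\rangle\rangle(a_R)$ and $\Phi_{\theta(\tau_j^-)}B(a_R)$ enter instead. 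On that set $|a_R|\le\sqrt R$, so by Assumption~A1 and the ellipticity (\ref{x7})---which, as each $\Phi_\omega$ is unitary, yields $\lambda E\le A(a)\le\lambda^{-1}E$ and hence a bounded $B=\sqrt A$---all the coefficients and their discrepancy are bounded by a constant $C(R,\phi)$.

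Next I would exploit that the discrepancy is confined to the small-action region. For $0\le s<\tau\le T$ and any bounded continuous functional $G$ of the restriction $a|_{[0,s]}$ (with $a(\cdot)$ the canonical process of law $Q_R^\delta=\mathcal{D}(\widetilde a_R^\delta)$), subtracting the It\^o martingale term yields
\[
\Big|\mathbf{E}^{Q_R^\delta}\big[\big(\phi(a(\tau))-\phi(a(s))-\int_s^\tau(\mathcal{L}^R\phi)(a(u))\,du\big)\,G(a|_{[0,s]})\big]\Big|\le C(R,\phi)\|G\|_\infty\sum_{k}\mathbf{E}\int_0^{\tau_R}1_{\{I_{Rk}(u)\le2\delta\}}\,du ,
\]
since only the $\Delta_j$-discrepancy survives and $\bigcup_j\Delta_j\subseteq\{\min_k I_{Rk}\le2\delta\}$. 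By Lemma \ref{w7} the right-hand side tends to $0$ along $\delta=\delta_l\to0$. The bracketed functional is bounded and $Q_R^0$-a.s. continuous on $C([0,T];\mathbb{C}^n)$, so $Q_R^{\delta_l}\rightharpoonup Q_R^0$ lets me pass to the limit and conclude that this expectation also vanishes under $Q_R^0$. As $\phi,s,\tau,G$ are arbitrary and $Q_R^0$ is carried by paths issued from $v_0$, the measure $Q_R^0$ solves the martingale problem for (\ref{w6})$_R$; equivalently, $Q_R^0=\mathcal{D}(\widetilde a_R)$ for a weak solution $\widetilde a_R$ of (\ref{w6})$_R$ with $\widetilde a_R(0)=v_0$.

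To finish I would establish uniqueness and the identity for the actions. The drift $\langle\langle P^1\rangle\rangle$ inherits ${\rm{Lip}}_{m_0}$ from $P^1$, while $B=\sqrt A$ is locally Lipschitz and non-degenerate, since $A$ is locally Lipschitz and, by (\ref{x7}), uniformly elliptic, and the matrix square root is Lipschitz on the cone $\{A\ge\lambda E\}$; together with the trivial post-$\tau_R$ coefficients, equation (\ref{w6})$_R$ has locally Lipschitz non-degenerate coefficients, hence enjoys pathwise uniqueness and, by Yamada--Watanabe, uniqueness in law. Thus $Q_R^0=\mathcal{D}(\widetilde a_R)$ does not depend on the chosen subsequence. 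Finally, by construction $\widetilde I_R^{\delta}(\tau)=I_R(\tau)$ for all $\tau$ a.s., so $\mathcal{D}(I(\widetilde a_R^{\delta_l}))=\mathcal{D}(I(a_R))$ for every $l$; since $a\mapsto I(a)$ is continuous from $C([0,T];\mathbb{C}^n)$ to $C([0,T];\mathbb{R}_+^n)$, the continuous mapping theorem with $Q_R^{\delta_l}\rightharpoonup Q_R^0$ gives $\mathcal{D}(I(\widetilde a_R))=\mathcal{D}(I(a_R))$.

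I expect the martingale-problem identification (the first two steps) to be the main obstacle. Its quantitative heart is that the ``bad'' set $\bigcup_j\Delta_j$, on which $\widetilde a_R^\delta$ merely rotates $a_R$ rather than solving (\ref{w6})$_R$, has expected length vanishing as $\delta\to0$; this is exactly Lemma \ref{w7}, and the uniform ellipticity of $\Psi\Psi^*$ is what forces the occupation time of a neighbourhood of $\partial\mathbb{R}_+^n$ to be small. The subtler technical point, to be handled with care, is the continuity defect of the functional created by the stopping-time cut-off $\tau_R$: one must verify that $Q_R^0$ assigns no mass to paths touching $\{|a|^2=R\}$ tangentially, which follows because under the elliptic dynamics $|a|^2$ is a non-degenerate diffusion crossing the level $R$ regularly.
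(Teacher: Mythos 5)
Your proposal is correct and follows essentially the same route as the paper's proof: you identify the limit law $Q_R^0$ as a solution of the martingale problem for the cut-off equation (\ref{w6})$_R$, with the coefficient discrepancy confined to $\bigcup_j\Delta_j\subseteq\{\min_k I_{Rk}\le2\delta\}$ and killed in the limit by the occupation-time bound of Lemma \ref{w7}, and then conclude via weak uniqueness of (\ref{w6})$_R$ and the continuous mapping theorem for $a\mapsto I(a)$. The only inessential differences are that you phrase the martingale problem through a generator acting on $\phi\in C_b^2(\mathbb{C}^n)$ where the paper tests the processes $N_k$, $N_k\overline{N_l}$ and $N_kN_l$, that you prove uniqueness of (\ref{w6})$_R$ directly (the matrix square root is Lipschitz on $\{A\ge\lambda E\}$, so $B$ is locally Lipschitz, plus Yamada--Watanabe) where the paper invokes Theorem \ref{w4}.(i) under its standing assumptions, and that you explicitly flag the $Q_R^0$-a.s.\ continuity of the cut-off functional at $\tau_R$, a point the paper leaves implicit.
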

\begin{proof}
We consider the natural filtered measurable space $(\widetilde{\Omega},\mathcal{B},\widetilde{\mathcal{F}}_{t})$, where $\widetilde{\Omega}=C([0,T];\mathbb{C}^{n})$, $\mathcal{B}$ is the Borel $\sigma$\text{-}algebra on $\widetilde{\Omega}$ and $\widetilde{\mathcal{F}}_{t}$ is its natural filtration. We set $\Delta=\bigcup_{j}\Delta_{j}$ and $\Lambda=\bigcup_{j}\Lambda_{j}$.
Denote
\begin{align*}
&N_{k}(a;\tau)=a_{k}(\tau)-\int_{0}^{\tau}1_{s\leq\tau_{R}}\langle\langle P^{1}\rangle\rangle_{k}(a(s)){\rm{d}}s, \qquad a\in\widetilde{\Omega};\\
&N_{k}^{\delta}(\tau)\!=\!\widetilde{a}^{\delta}_{Rk}(\tau)\!-\!\int_{0}^{\tau}1_{s\leq\tau_{R}}1_{s\in\Lambda}\langle\langle P^{1}\rangle\rangle_{k}(\widetilde{a}^{\delta}_{R}(s)){\rm{d}}s\!-\!\sum\limits_{j}\int_{0}^{\tau}1_{s\leq\tau_{R}}1_{s\in\Delta_{j}}\Phi_{\theta(\tau_{j}^{-})}
\langle\langle P\rangle\rangle_{k}(a_{R}(s)){\rm{d}}s;\\
&M_{k}(\tau)=\sum\limits_{j}\int_{0}^{\tau}\left[1_{s\leq\tau_{R}}1_{s\in\Delta_{j}}\langle\langle P^{1}\rangle\rangle_{k}(\widetilde{a}^{\delta}_{R}(s))-1_{s\leq\tau_{R}}1_{s\in\Delta_{j}}\Phi_{\theta(\tau_{j}^{-})}\langle\langle P\rangle\rangle_{k}(a_{R}(s))\right]{\rm{d}}s.
\end{align*}
Due to (\ref{e5}), the process $N_{k}^{\delta}(\tau)$ is a martingale. Next we estimate $M(\tau)$:
\begin{equation}\label{e1}
\begin{split}
&\mathbf{E}\sup\limits_{0\leq\tau\leq T}\big|M_{k}(\tau)\big|\\
\leq &\mathbf{E}\int_{0}^{\tau_{R}}\big|1_{s\in\Delta}\langle\langle P^{1}\rangle\rangle_{k}(\widetilde{a}^{\delta}_{R}(s))\big|{\rm{d}}s+\mathbf{E}\int_{0}^{\tau_{R}}\big|1_{s\in\Delta}\langle\langle P\rangle\rangle_{k}(a_{R}(s))\big|{\rm{d}}s\\
\leq &C(R)\left(\mathbf{E}\int_{0}^{\tau_{R}}1_{\{[\widetilde{I}^{\delta}_{R}(\tau)]\leq2\delta\}}(\tau){\rm{d}}\tau\right)^{\frac{1}{2}}
\end{split}
\end{equation}
since $\widetilde{a}^{\delta}_{R}(\tau)=\Phi_{\theta(\tau_{j}^{-})}a_{R}(\tau)$ for $\tau\in\Delta_{j}$. In view of Lemma \ref{w7}, the right-hand side goes to $0$ with $\delta$.

We claim that $N_{k}(a;\tau)$ is a $Q_{R}^{0}$\text{-}martingale on the space $(\widetilde{\Omega},\mathcal{B},\widetilde{\mathcal{F}}_{t})$. To prove that for any $0\leq\tau_{1}\leq\tau_{2}\leq T$ and a bounded continuous function $f$ on $\widetilde{\Omega}$ such that $f(a)$ depends only on $a(\tau)$ for $\tau\in[0,\tau_{1}]$, we have to show that
\begin{align*}
\mathbf{E}^{Q_{R}^{0}}\left(\left(N_{k}(a;\tau_{2})-N_{k}(a;\tau_{1})\right)f(a)\right)=0.
\end{align*}
Since $Q_{R}^{\delta_{l}}\rightharpoonup Q_{R}^{0}$, and $\mathbf{E}^{Q_{R}^{\delta}}(N_{k}(a;\tau)f(a))=\mathbf{E}N_{k}(\widetilde{a}_{R}^{\delta}(\cdot);\tau)f(\widetilde{a}_{R}^{\delta}(\cdot))$, then the left-hand side equals
\begin{align*}
&\lim_{\delta_{l}\rightarrow0}\mathbf{E}^{Q_{R}^{\delta_{l}}}\left(\left(N_{k}(a;\tau_{2})-N_{k}(a;\tau_{1})\right)f(a)\right)\\
=&\lim_{\delta_{l}\rightarrow0}\mathbf{E}\left(\left(\widetilde{a}^{\delta_{l}}_{Rk}(\tau_{2})-\widetilde{a}^{\delta_{l}}_{Rk}(\tau_{1})-\int_{\tau_{1}}^{\tau_{2}}
1_{s\leq\tau_{R}}\langle\langle P^{1}\rangle\rangle_{k}(\widetilde{a}^{\delta_{l}}_{R}(s)){\rm{d}}s\right)f(\widetilde{a}_{R}^{\delta_{l}}(\cdot))\right)\\
=&\lim_{\delta_{l}\rightarrow0}\mathbf{E}\left((M_{k}(\tau_{1})-M_{k}(\tau_{2}))f(\widetilde{a}_{R}^{\delta_{l}}(\cdot))\right)
\leq C\lim_{\delta_{l}\rightarrow0}\mathbf{E}\big|M_{k}(\tau_{1})-M_{k}(\tau_{2})\big|=0,
\end{align*}
where the second equality holds due to the fact that $N_{k}^{\delta_{l}}(\tau)$ is a martingale and the last equality follows from the estimate (\ref{e1}). Then $N_{k}(\tau)$ is a $Q_{R}^{0}$\text{-}martingale on $(\widetilde{\Omega},\mathcal{B},\widetilde{\mathcal{F}}_{t})$.

Due to (\ref{e5}), for any $1\leq k,l\leq n$ the process $N_{k}^{\delta_{l}}(\tau)\overline{N_{l}^{\delta_{l}}}(\tau)-2\sum_{j}\int_{0}^{\tau}1_{s\leq\tau_{R}}B_{kj}\overline{B_{lj}}{\rm{d}}s-2\int_{0}^{\tau}1_{s\geq\tau_{R}}1_{k=l}{\rm{d}}s$ is a martingale. By repeating the arguments above, we find that the process $N_{k}(\tau)\overline{N_{l}}(\tau)-2\sum_{j}\int_{0}^{\tau}1_{s\leq\tau_{R}}B_{kj}\overline{B_{lj}}{\rm{d}}s-2\int_{0}^{\tau}1_{s\geq\tau_{R}}1_{k=l}{\rm{d}}s$ is a $Q_{R}^{0}$\text{-}martingale on $(\widetilde{\Omega},\mathcal{B},\widetilde{\mathcal{F}}_{t})$.

Similarly, the fact that the process $N_{k}^{\delta_{l}}(\tau)N_{l}^{\delta_{l}}(\tau)$ is a martingale and the convergence $Q_{R}^{\delta_{l}}\rightharpoonup Q_{R}^{0}$ imply that $N_{k}(\tau)N_{l}(\tau)$ is a $Q_{R}^{0}$\text{-}martingale on $(\widetilde{\Omega},\mathcal{B},\widetilde{\mathcal{F}}_{t})$.

Therefore $\widetilde{a}_{R}(\tau)$ is a martingale solution of the cut-off modified effective equation (\ref{w6})$_{R}$ with initial data $v_{0}$ (see Definition 4.5 and Appendix B in \cite{HK} for martingale solutions in $\mathbb{C}^{n}$). Thus it is its weak solution. Since a solution of equation (\ref{w6})$_{R}$ with a given initial data is unique, then the solution $\widetilde{a}_{R}(\tau)$ is its unique weak solution. As $\widetilde{I}^{\delta}_{R}(\tau)=I(a_{R}(\tau))$ a.s., then $\mathcal{D}(I(\widetilde{a}_{R}))=\mathcal{D}(I(a_{R}))$.
\end{proof}
\textbf{Step 4:} \emph{The limit as $R\rightarrow\infty$.}

Similar to the proof of Lemma \ref{x3} and due to (\ref{w8}), the set of measures $\{\mathcal{D}(\widetilde{a}_{R}(\tau)),R\geq0\}$ is tight. Consider some limiting measure $Q^{0}$ as $R\rightarrow\infty$. By repeating the proof of Lemma \ref{z4}, we find that $Q^{0}=\mathcal{D}(\widetilde{a}(\tau))$, where $\widetilde{a}(\tau)$ is a weak solution of the modified effective equation (\ref{w6}). Since a solution of (\ref{w6}) with a given initial data is unique, then the solution $\widetilde{a}(\tau)$ is its unique weak solution. By the above it satisfies $\mathcal{D}(I(\widetilde{a}_{R}))=\mathcal{D}(I(a_{R}))$, for any $R\in\mathbb{N}$, then $\mathcal{D}(I(\widetilde{a}))=\mathcal{D}(I(a))$, where $a(\tau)$ is the solution of effective equation (\ref{w3}).
Therefore, if we replace the effective equation (\ref{w3}) with the modified effective equation (\ref{w6}), then the assertions of Theorem \ref{w4}.(iii) and Theorem \ref{w5}.(ii) still hold.
\end{proof}

\noindent{\bf{Acknowledgement}}
We thank Alexander Veretennikov for discussion of stochastic equations. The research of Jing Guo was supported by the China Scholarship Council (202306060105). The research of Zhenxin Liu was supported by National Key R\&D Program of China (No. 2023YFA1009200), NSFC (Grant 11925102), Liaoning Revitalization Talents Program (Grant XLYC2202042).


\begin{thebibliography}{xx}
\bibitem{BM}
N. N. Bogoliubov, Y. A. Mitropolsky, Asymptotic Methods in the Theory of Non-linear Oscillations. International Monographs on Advanced Mathematics and Physics, Hindustan Publishing Corp., Delhi, Gordon and Breach Science Publishers, Inc., New York, 1961, x+537 pp.

\bibitem{FW}
M. Freidlin, A. Wentzell, Averaging principle for stochastic perturbations of multifrequency systems. {\it Stoch. Dyn.} {\bf 3}  (2003),  no. 3, 393--408.

\bibitem{JKL2}
J. Guo, S. Kuksin, Averaging for stochastic nonlinear Schr\"{o}dinger equations. work in preparation.

\bibitem{HK}
G. Huang, S. Kuksin,  Averaging and mixing for stochastic perturbations of linear conservative systems. {\it
Uspekhi Mat. Nauk} {\bf 78}  (2023),  no. 4, 3--52. {\it
Russian Math. Surveys} {\bf 78}  (2023),  no. 4, 585--633.

\bibitem{HKP}
G. Huang, S. Kuksin, A. Piatnitski, Averaging for stochastic perturbations of integrable systems. arXiv:2307.07040.

\bibitem{JKW}
W. Jian, S. Kuksin, Y. Wu, Krylov-Bogolyubov averaging. {\it Uspekhi Mat. Nauk}  {\bf
75} (2020), no. 3, 37--54. {\it Russian Math. Surveys}  {\bf
75} (2020), no. 3, 427--444.

\bibitem{NK}
N. Krylov, Controlled Diffusion Processes. Springer-Verlag, New York-Berlin, 1980, xii+308 pp.

\bibitem{NK02}
N. Krylov, Introduction to the Theory of Random Processes. Grad. Stud. Math., 43, American Mathematical Society, Providence, RI, 2002, xii+230 pp.

\bibitem{KP}
S. Kuksin, A. Piatnitski, Khasminskii-Whitham averaging for randomly perturbed KdV equation. {\it J. Math. Pures Appl. (9)} {\bf 89} (2008),  no. 4, 400--428.

\bibitem{WY}
S. Watanabe, T. Yamada, On the uniqueness of solutions of stochastic differential equations. II. {\it J. Math. Kyoto Univ.} {\bf 11}  (1971), 553--563.

\bibitem{W}
H. Whitney, Differentiable even functions. II. {\it Duke Math. J.} {\bf 10}  (1943), 159--160.

\bibitem{YW}
T. Yamada, S. Watanabe, On the uniqueness of solutions of stochastic differential equations. {\it J. Math. Kyoto Univ.} {\bf 11}  (1971), 155--167.
\end{thebibliography}
\end{document}